\documentclass[a4paper]{scrarticle}
\usepackage{a4wide}
\usepackage{amscd,amssymb,amsmath,amsthm}
\usepackage{graphicx}
\usepackage[backend=biber, style=ieee,giveninits=true,sorting=anyt]{biblatex}
\usepackage{bbm}
\usepackage[autostyle]{csquotes}
\usepackage{hyperref}
\usepackage{doi}
\hypersetup{
    colorlinks=true,
    linkcolor=blue,
    filecolor=black,      
    urlcolor=blue,
    citecolor=blue,
}
 \usepackage{todonotes}

\newcommand{\invisible}[1]{}
\usepackage[section]{placeins}
\usepackage[shortlabels]{enumitem}
\usepackage[lofdepth,lotdepth]{subfig}
\usepackage{tikz}
\usepackage{todonotes}
\usetikzlibrary{arrows,shapes,decorations,automata,backgrounds,petri,bending,trees,decorations.pathreplacing,calc}

\newcommand{\R}{\mathbb{R}}
\newcommand{\N}{\mathbb{N}}
\newtheorem{thm}{Theorem}[section]
\newtheorem{defn}[thm]{Definition}
\newtheorem{lemma}[thm]{Lemma}
\newtheorem{pro}[thm]{Proposition}
\newtheorem{rk}[thm]{Remark}
\newtheorem{cor}[thm]{Corollary}

\newtheorem{con}[thm]{Conjecture}
\newtheorem{ex}[thm]{Example}

\newcommand{\footnoteremember}[2]{
	\footnote{#2}
	\newcounter{#1}
	\setcounter{#1}{\value{footnote}}
}
\newcommand{\footnoterecall}[1]{
	\footnotemark[\value{#1}]
}
\newcommand{\sss}{\scriptscriptstyle}

\newcommand{\dphigeqalphadir}{d_\phi^{\sss(-,\geq \alpha)}}
\newcommand{\nn}{\nonumber}
\DeclareSymbolFont{extraup}{U}{zavm}{m}{n}
\DeclareMathSymbol{\varheart}{\mathalpha}{extraup}{86}
\DeclareMathSymbol{\vardiamond}{\mathalpha}{extraup}{87}

\newcommand{\conn}{\longrightarrow}

\begin{document}
\title{Power-law hypothesis and (un)fairness of PageRank on undirected multi-type PAMs}
\author{
Christian Borgs \footnoteremember{Ber}{EECS department, University of California, Berkeley; 
borgs@berkeley.edu} 
\and Florian Henning\footnoteremember{TUe}{Eindhoven University of Technology, Department of Mathematics \& Computer Science;\\ 
f.b.henning@tue.nl, r.w.v.d.hofstad@tue.nl, n.v.litvak@tue.nl}
		\and  Remco van der Hofstad \footnoterecall{TUe}
  \and
	Nelly Litvak\footnoterecall{TUe}
}
\date{\today} 	
\maketitle

\begin{abstract}
The preferential attachment model (PAM) describes the sequential growth of a network based on the ``rich-get-richer" principle. Several versions of it have become established for modeling, e.g., citation networks, capturing a power-law degree distribution. \\
Directed versions of the preferential attachment model where the edges are directed from the new to the old vertices have been the subject of extensive research. They have been shown to exhibit remarkable properties such as heavier tails for the limiting graph-normalized PageRank than for the in-degrees. By contrast, for the undirected version, we recently showed that PageRank has similar tails as the degree.
In the present paper, we discuss the PageRank asymptotics for a multi-type version of the undirected PAM (here vertices have different colors), complementing previous results of Antunes, Bhamidi, Banerjee and Pipiras on the asymptotics of PageRank on similar directed multi-type or colored PAMs.
Our studies are motivated by the aim to go beyond the rigid rule of edge orientation in directed preferential attachment models.

As the main result, for the case of a finite set of colors, we show that the power-law hypothesis for PageRank is fulfilled also for the colored undirected PAM, where, by contrast to the directed case, the power-law exponent is color-dependent for some choices of the initial color distribution and the attractiveness function.
For the specific case of a two-type model, we discuss implications of our results on fairness in sampling underrepresented nodes from the network.

\end{abstract}

\noindent
\textbf{Mathematics Subject Classifications (2020).} 05C80 (primary);\\
60J80, 60B20 (secondary)\\
\textbf{Key words.} PageRank, scale-free network, heavy tails, local weak convergence, color.
\section{Introduction}
The preferential attachment model (PAM) was initially introduced by Barab\'asi and Albert in \cite{BaAl99} to model the structure and growth of the web graph and its particular property of being scale free.
In PAMs, at each time step, a new vertex is added to the existing graph. The core idea is that this new vertex has $m$ half-edges to connect to the already existing vertices in the network, where high-degree vertices are more likely to be connected to (\textit{preferential attachment}). In the initial formulation in \cite{BaAl99}, the probability of the new vertex in the network to connect an edge to a specific vertex in the graph is \textbf{proportional} to the degree of that vertex (up to normalization), with the additional constraint of simplicity, which means that no two of the $m$ half edges are allowed to connect to the same existing vertex in the network.  Since \cite{BaAl99}, several variants and generalizations of the original Barab\'asi-Albert model have been studied, many of them describing multi-graphs instead of simple graphs (see also \cite[Ch.\ 8]{vdH17} for a literature overview on common variations of the PAM).  

In this paper, we are mainly interested in the case of undirected PAM with a finite number of vertex colors.
We start with a short summary of some of the landmarks in studying versions of the PAM that will become important for the present paper. 

For the case of the undirected PAM with an arbitrary finite number of colors, Jordan \cite[Theorem 2.2]{Jo13} provides the precise power-law asymptotics of the degree sequence. The proof is based on studying the discrete dynamical system of the proportion of edge ends that connect to a vertex with a specified color. Furthermore, in the case of colors in an arbitrary (possibly infinite) space (see \cite{FlAbFrVe06} for an introduction to that case), and building upon the formerly mentioned result, \cite[Theorem 2.3]{Jo13} gives power-law bounds on the limiting degree-sequence. 
These result extend earlier results on the PAM with \textit{fitness} \cite{BoChDaRo07}, where the linear attachment rule is perturbed by a multiplicatively acting random attribute (\textit{fitness}) of the target vertex. 
Going beyond the degree sequence, Berger et al. \cite{BeBoCh14} first provided a full description of the \textbf{local limit} of the undirected preferential attachment model without colors. Here, the proof in \cite{BeBoCh14} is based on exploiting the close relation between the construction of the PAM and the update rule in a P\'olya urn model by means of a \textit{P\'olya urn representation} of the PAM.\\
In short, \textit{local} (weak) convergence of sequences of (random or deterministic) \textbf{undirected} graphs (cf. \cite{AldSte04, BeSc01}) is a notion defined on the space of connected rooted graphs up to root-preserving isomorphisms, into which any graph can be embedded by choosing a vertex uniformly at random from the graph as a root, and restricting to the connected component around that vertex. 
Note that the limiting behavior of (root-)PageRank is uniquely determined by the local limit (if it exists) \cite{GavdHLi20,vH24,vHPa25}.
For sequences of \textbf{directed} (random or deterministic) graphs, similar notions of local convergence exist, where the notion used in  \cite{GavdHLi20}, which will be particularly relevant for the remainder of this paper, is based on the topology of \textit{vertex-marked} incoming neighborhoods, with the vertex marks denoting the vertex out-degrees. Following \cite{GavdHLi20}, any neighborhood of a vertex in the directed graph contains only vertices that are \textit{explorable} by backtracking paths of \textbf{incoming} edges.
In the case of the PAM without colors, the known weak limit for the undirected case is sufficient to directly deduce the corresponding directed local limit (in the above sense) from it.
This observation was first used in \cite{BaOC22} in the context of the PageRank asymptotics for the \textbf{directed} PAM without marks, where the local limit for $m \geq 2$ was taken from the undirected case discussed in \cite{BeBoCh14} and then extended by a new representation in terms of a stopped continuous-time branching process.
For the directed version of the  preferential attachment model with marks in an arbitrary finite space, \cite[Theorem 4.3]{AnBaBhPi26} provides a full description of the local limit of in-components. Interestingly, in contrast to \cite{BeBoCh14}, the proof of \cite[Theorem 4.3]{AnBaBhPi26} is not based on a P\'olya urn representation, but on a generalization of the tree case ($m=1$), which is proved in \cite[Theorem 3.5]{AnBaBhPi26} employing the notion of \textit{fringe convergence}. Fringe convergence is a tree-specific formulation of local convergence introduced by Aldous in \cite{Al91}. This allows the authors of \cite{AnBaBhPi26} to prove directed local convergence by means of an adaption of the dynamical system approach of \cite{Jo13}. However, making heavy use on the orientation of edges, the proof of \cite{AnBaBhPi26} does not have an obvious adaption to the undirected set-up.    
For the case of the \textit{generalized} undirected preferential attachment model without colors, but with a random offspring distribution for the new vertices, \cite{DevEvH09} provides a description of the limiting behavior of the degree sequence, while, under a moment assumption on the offspring distribution of the new vertices, \cite{GaHaHoRa22} extends this result to a full description of the local limit of the graph sequence.

\paragraph{Main contribution of this paper.}
We prove power-law bounds for the PageRank at a uniformly chosen vertex from an undirected multi-type PAM conditioned on its color (see Theorem \ref{thm: Power-law bounds} below). In contrast to the corresponding known result for the directed model with edges directed from younger to older vertices, the power-law exponents for the PageRank in the undirected model can exhibit a dependence on the type of the vertex (see Corollary \ref{cor: exponents} below).    
As an application to fairness in sampling nodes from an undirected two-type network, where vertices of one type form an underrepresented (regarding the proportion of vertices in the network) minority, we compare sampling by PageRank with uniform sampling and sampling by degree for large networks and small values of the damping factor $c$ (see Proposition \ref{pro: MinorityFraction} below). In the case of a symmetric attractiveness kernel, we illustrate our results by simulations.

\subsection{Outline of the paper}
In Section \ref{subsec: setup} we introduce the notational framework that is necessary for the remainder of the paper. Section \ref{sec: Result} then presents the main new results: color-dependent power-law bounds for the PageRank distribution (Theorem \ref{thm: Power-law bounds}) and an expression for the corresponding power-law exponents (Corollary \ref{cor: exponents}). In Section \ref{Sec: Applications} we take a closer look at the case of a two-element color space and shed light on the implications of our results in view of fairness in sampling nodes from the networks. Section \ref{Sec: Applications} contains both abstract theoretical results, as well as illustrative plots of the relevant quantities contrasted with a simulation study.
Afterwards, the proofs are given in Section \ref{sec: proofs}.
First, in Section \ref{subsec: upper bound} we provide the proof of the upper power-law bound of Theorem \ref{thm: Power-law bounds}, which is a consequence of the degree asymptotic established in \cite[Theorems 2.1 and 2.2]{Jo13}, together with the result of \cite[Theorem 1.1]{HevHLi25a} that states that in undirected networks the PageRank of any vertex is always bounded from above by the respective degree. In contrast to that, the subsequent proof of the lower bound outlined in Section \ref{subsec: lower bound} is more involved and makes use of the fact that a fixed out-degree of new vertices allows us to deduce a suitable power-law bound on PageRank in the \textbf{undirected} case from the known (\cite[Theorem 4.3]{AnBaBhPi26}) local a.s.-limit of the \textbf{directed} version of the multi-type PAM. 
Section \ref{sec: proofs} then closes with the proof of Proposition \ref{pro: MinorityFraction} 
on the role of the PageRank-shares of vertices of different types for fairness in sampling representative nodes from the network. 
Finally, the paper finishes with Section \ref{sec: discussion}, which provides a discussion of the scope of the paper and an outlook on related research questions.  
\subsection{PageRank on undirected PAM with finite set of colors}\label{subsec: setup}
In this subsection we establish the basic notions that we will employ and the definition of the model that is central to this paper.

\paragraph{Undirected PAM with a finite set of colors.}
We adapt the notations in \cite{Jo13,AnBaBhPi26,HevHLi25a} to establish our formal framework.
Let $S=\{1,2,\ldots,q\}=:[q]$, where $q \in \N$, denote the finite set of colors, and let $\mu$ be any strictly positive fixed probability measure on $(S,2^S)$, where we abbreviate $\mu_i:=\mu(\{i\})$ for the atoms.  
Further, let $\kappa\colon S \times S \rightarrow (0,\infty)$ denote the \textit{attractiveness} function. Finally, let $\boldsymbol{m}=(m_1,\ldots,m_q)^T \in \mathbb{N}^q$ be a fixed deterministic vector, where as in the reminder of this paper, boldface fonts denote vectors or matrices.

We define a sequence $(G_n)_{n \in (\N \cup \{0\})}$ of undirected colored multi-graphs without self-loops that grows sequentially as follows:\\
Initially, at time $n=0$, there is a base graph $G_0$ with $n_0$ vertices, where each vertex $v$ in $G_0$ has a color $x_v \in S$. 
For any time $n+1>0$, given the sequence $(G_s)_{s \leq n}$, let $v_{n+1}$ denote the new vertex that is added to the graph $G_n$.
First, the vertex $v_{n+1}$ is assigned a color $X_{v_{n+1}}$ according to $\mu$ independently of everything else.
Second, conditionally on $X_{v_{n+1}}=k$, the vertex $v_{n+1}$ is assigned $m_k$ half-edges, which we denote by $v^{\sss(1)}_{n+1},\ldots,v^{\sss(m_k)}_{n+1}$. 
Third, conditionally on $G_n$ and $X_{v_{n+1}}=k$, the $m_{k}$ half-edges of vertex $n+1$ are independently attached to vertices in the existing graph $G_n$ according to
\begin{equation}\label{eq: AttachmentRule}
\mathbb{P}(v^{\sss(h)}_{n+1}\conn v \mid G_n,\ X_{v_{n+1}}=k)=\frac{\kappa(X_v,k)d_{v}(G_n)}{\sum_{v' \in G_n}\kappa(X_{v'},k)d_{v'}(G_n)}, 
\end{equation}
where $d_{v'}(G_n)$ is the degree of vertex $v$ in $G_n$ \textbf{before} any of the half-edges of $v_{n+1}$ has been attached to the graph. 
Note that due to the specific form of the attachment rule \eqref{eq: AttachmentRule}, the model is invariant under multiplication of $\kappa$ by a positive constant. 
\begin{rk}[Attachment rule in non-tree case: independent vs.\ sequential model]
The attachment rule \eqref{eq: AttachmentRule}, which is the basis of all results taken from \cite{Jo13} and \cite{AnBaBhPi26}, does not consider an intermediate update of the degrees in the course of subsequentially attaching the half-edges of the new vertex. In the terminology of \cite{BeBoCh14}, this describes the \textbf{independent model}, where the results of \cite{BeBoCh14} regarding the undirected PAM without colors refer to the \textbf{sequential model}, in which the degrees occurring in the attachment rule are updated after each half-edge attachment. 
\end{rk}
\noindent
\paragraph{Graph-normalized PageRank.}
For any undirected finite graph $G=(V(G),E(G))$ with vertex set $V(G)$ and edge set $E(G)$, the \textit{graph-normalized} (normalized by the number of vertices) \textit{PageRank} $(R_v)_{v \in V}$ with damping factor $c \in (0,1)$ is defined as the unique solution to 
\begin{equation}
R_v=R_v(G)=c\sum_{\substack{w \in V \\w \sim v}} \frac{1}{d_w}R_w+1-c,    
\end{equation}
where we write $w \sim v$ when $v$ and $w$ are vertices in $G$ that are connected by an edge, i.e., the sum runs over all nearest neighbors of $v$. In case of multi-edges, the respective term is multiplied by the multiplicity of the edge.

\section{Result: Color-dependent power-law for root PageRank}\label{sec: Result}
In order to state our first main result, we introduce some definitions and objects from \cite{AnBaBhPi26}. The meaning and intuition behind these will be explained later on. Note that our notation partially differs from that in \cite{AnBaBhPi26}.\\
Let $\boldsymbol{\eta}^{\boldsymbol{m}}=(\eta^{\boldsymbol{m}}_1,\ldots,\eta^{\boldsymbol{m}}_q)$ be the unique minimizer for the
convex function  
\begin{equation}\label{eq: LyapunovFunction}
V^{\boldsymbol{m}}(y):=\sum_{i=1}^q y_i-\frac{1}{2}\sum_{k=1}^q m_k\mu_k \left[\log(y_k)+\log \left(\sum_{l=1}^q y_l\kappa(l,k)\right)\right], \quad \boldsymbol{y} \in [0,\infty)^q,\end{equation}
in the interior of the set
\[\Big\{\boldsymbol{y} \in [0,\infty)^q \mid \sum_{k=1}^qy_k=\sum_{k=1}^q\mu_k m_k \Big\}.\]
The function $V^{\boldsymbol{m}}$ first appeared in \cite{Jo13} for the case of $m_k=m_1$ for all $k \in S$, i.e., when the number of half-edges of a new vertex does not depend on its color. 
For $k,l \in S$, set \begin{equation}\label{eq: varphik}
\varphi_{k,l}^{\boldsymbol{m}}:=\frac{m_l\kappa(k,l)\mu_l}{\sum_{l' \in S}\kappa(l',l)\eta^{\boldsymbol{m}}_{l'}}
\quad \text{ and } \quad  \varphi_k^{\boldsymbol{m}}:=\sum_{l \in S}\varphi_{k,l}^{\boldsymbol{m}}.\end{equation}
For our next theorem, we only need $\varphi_k^{\boldsymbol{m}}$, but the definition of $\varphi_{k,l}^{\boldsymbol{m}}$ is relevant in what follows.
Finally, for functions $f,g\colon \mathbb{R} \rightarrow \mathbb{R}$, we write, for $a \in \mathbb{R} \cup \{-\infty,+\infty \}$, that $f = o(g)$ as $x$ tends to $a$ if and only if $\limsup_{x \rightarrow a}\vert \frac{f(x)}{g(x)} \vert=0$ and $f = O(g)$ as $x$ tends to $a$ if and only if $\limsup_{x \rightarrow a} \vert\frac{f(x)}{g(x)} \vert <\infty$.
With these definitions in hand, our main result is as follows:
\begin{thm}[Power-law bounds for PageRank in undirected multi-type PAM]\label{thm: Power-law bounds}
Consider the undirected multi-type PAM $(G_n)_{n \in \N}$ with finite set of colors $S=\{1,2,\ldots,q\}$ with $q \in \N$. Further, for any $n \in \N$, let $\phi_n$ denote a vertex that is chosen uniformly at random from $G_n$, and let $c \in (0,1)$.
Then there exists a constant $\gamma \in (1,\infty)$ that does not depend on the color $k$ such that, as $r \rightarrow \infty$, the graph-normalized PageRank with damping factor $c$ satisfies
\begin{align}
&\liminf_{n \rightarrow \infty} \mathbb{P}(R_{\phi_n}(G_n)>r \mid X_{\phi_n}=k) \geq (1+o(1))\frac{2\Gamma(m_k+\frac{2}{\varphi_k^{\boldsymbol{m}}})}{\varphi_k^{\boldsymbol{m}} \Gamma(m_k)}\sum_{s=\lceil \gamma r \rceil}^\infty\frac{\Gamma(s)}{\Gamma(s+\frac{2} {\varphi_k^{\boldsymbol{m}}}+1)} \cr
&\limsup_{n \rightarrow \infty} \mathbb{P}(R_{\phi_n}(G_n)>r \mid X_{\phi_n}=k) \leq \frac{2\Gamma(m_k+\frac{2}{\varphi_k^{\boldsymbol{m}}})}{\varphi_k^{\boldsymbol{m}} \Gamma(m_k)}\sum_{s=\lceil r \rceil}^\infty\frac{\Gamma(s)}{\Gamma(s+\frac{2} {\varphi_k^{\boldsymbol{m}}}+1)}.  
\end{align}

\end{thm}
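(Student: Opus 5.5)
The upper and lower bounds are proved separately, and both rest on the limiting degree distribution conditionally on the color. From Jordan \cite[Theorems 2.1 and 2.2]{Jo13}, extended to color-dependent $m_k$ (the Lyapunov function $V^{\boldsymbol m}$ and the minimizer $\boldsymbol\eta^{\boldsymbol m}$ are exactly what the dynamical-system argument produces), the degree of a uniform vertex of color $k$ converges in probability to the law
\[
p_k(s)=\frac{2}{\varphi_k^{\boldsymbol m}}\frac{\Gamma(m_k+\frac{2}{\varphi_k^{\boldsymbol m}})}{\Gamma(m_k)}\frac{\Gamma(s)}{\Gamma(s+\frac{2}{\varphi_k^{\boldsymbol m}}+1)},\qquad s\ge m_k .
\]
This is a standard Yule-type recursion: a color-$k$ vertex of degree $s$ gains a half-edge at rate $\tfrac12\varphi^{\boldsymbol m}_k s$ per step, normalized by the empirical weight $\sum_l \kappa(l,\cdot)\eta_l$. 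I will first verify this identification of the constant and the exponent with the normalization in \eqref{eq: varphik}.

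\textbf{Upper bound.} By \cite[Theorem 1.1]{HevHLi25a}, on any undirected graph the graph-normalized PageRank satisfies $R_v\le d_v$ (one may need $d_v\ge1$, which holds after time $0$). Hence
\[
\mathbb P(R_{\phi_n}>r\mid X_{\phi_n}=k)\le \mathbb P(d_{\phi_n}(G_n)\ge\lceil r\rceil\mid X_{\phi_n}=k).
\]
The empirical degree proportions converge in probability and are bounded by $1$, so taking $\limsup_n$ gives the tail sum of $p_k$ from $\lceil r\rceil$, which is exactly the stated upper bound.

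\textbf{Lower bound.} The plan is to show $R_v\ge d_v/\gamma$ for a positive (limiting) fraction of vertices, in a way that preserves the degree tail up to a $1+o(1)$ factor. The starting inequality is
\[
R_v\ge (1-c)+c\sum_{w\sim v}\frac{R_w}{d_w}\ge (1-c)+c(1-c)\sum_{w\sim v}\frac1{d_w}.
\]
Each younger neighbor $w$ of $v$ (an edge oriented $w\to v$ in the directed version) has degree $d_w$ that is typically $O(1)$ but not uniformly bounded. I will therefore use the directed local limit \cite[Theorem 4.3]{AnBaBhPi26}. Because every new vertex has exactly $m_{X_w}$ out-edges, the undirected graph determines the directed one, and in-neighborhoods converge locally in probability to a multi-type continuous-time branching process stopped at an exponential time. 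In that limit, a root with in-degree $s-m_k$ has in-children whose own degrees are i.i.d.-like and, conditionally on the root's large degree, have a law not depending on $s$; in particular their degrees have a finite first moment. By a law of large numbers over the $s-m_k$ children, $\sum_{w}1/d_w\ge (s-m_k)\,\mathbb E[1/D]\,(1-\varepsilon)$ with conditional probability tending to $1$ as $s\to\infty$. Choosing $\gamma>1/(c(1-c)\mathbb E[1/D])$ (with some slack) then gives $\mathbb P(R>r\mid X=k)\ge(1+o(1))\mathbb P(d\ge\gamma r\mid X=k)$. The finite-$n$ statement follows because $\liminf_n$ of the local functional $\mathbbm 1\{(1-c)+c(1-c)\sum_{w\to\phi_n}1/d_w>r\}$ is controlled by local convergence, since the functional depends only on the depth-one in-neighborhood together with out-degree marks and in-degrees. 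The constant $\gamma$ can be made color-independent by taking a maximum over the finitely many colors.

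\textbf{Main obstacle.} The hard step is the conditional law of large numbers. I need the degrees of the in-children of a high-degree root in the limiting branching process to be, conditionally on the root's degree being large, asymptotically independent, with a uniform positive lower bound on $\mathbb E[1/D]$. My first attempt is to avoid exact independence: in the continuous-time embedding, the children of the root are born at times of a Poisson-type process, and each child's degree is $m+$ (its own in-degree by the stopping time). This is stochastically dominated by the in-degree of a process run for the full stopping time, whose law is uniform in $s$. Markov's inequality then shows that at least half of the children have degree at most a fixed constant $K$. This yields $\sum_w 1/d_w\ge (s-m_k)/(2K)$ with high probability, which suffices for some finite $\gamma$.
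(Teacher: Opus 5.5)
Your upper bound is the paper's argument: $R_v\le d_v$ from \cite[Theorem 1.1]{HevHLi25a}, combined with the color-conditional limiting degree law, which the paper takes from \cite[Theorem 4.4]{AnBaBhPi26}. Your lower-bound skeleton also matches the paper. You truncate to $c(1-c)\sum_{u\to v}1/(m_{X_u}+d_u^-)$, pass to the directed limit $\mathcal{T}_X(\tau)$ of \cite[Theorem 4.3]{AnBaBhPi26}, and then need that, conditionally on $d_\phi^-=s\to\infty$, a fixed positive fraction of the root's children have bounded degree.

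The gap is in how you justify this last step. The claimed domination ``by the in-degree of a process run for the full stopping time, whose law is uniform in $s$'' is false. Conditioning on $d_\phi^-=s$ tilts $\tau$: since the root's in-degree grows like $e^{\varphi_k t}$, $\tau$ must be of order $\varphi_k^{-1}\log s$. So that process has a law that escapes to infinity with $s$; for instance, a child of color $l$ born near time $0$ has in-degree of order $s^{\varphi_l/\varphi_k}$. For the same reason the children are not i.i.d.\ with an $s$-independent law. Your claim that their degrees have a finite first moment is also false in general. The age $\tau-T_j$ of a uniformly chosen child is asymptotically $\mathrm{Exp}(\varphi_k)$, and a child of color $l$ and age $u$ has mean in-degree $m_l(e^{\varphi_l u}-1)$. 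This integrates to $+\infty$ whenever $\varphi_l\ge\varphi_k$, which already happens with one color. So the Markov step has no valid input.

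The missing idea is to discard the early children and use only those born within $O(1)$ time of $\tau$, which is what the paper does in three steps.
\begin{enumerate}[(i)]
\item Given $d_\phi^-=r$, $\tau$ exceeds the $r$-th birth time by at most $t$ with probability tending to one. The chance that the next inter-birth time exceeds $t$ decays exponentially in $r$, while $\mathbb{P}(d_\phi^-=r)$ decays only polynomially (Lemma \ref{lem: lateStop}).
\item The inter-birth times are independent exponentials with rates growing linearly in $j$, because the birth rate is proportional to the current degree. A harmonic-sum computation and Chebyshev's inequality then show that, for $\tilde t>2/\varphi_k$, at least a fraction $1-\tilde p>0$ of the $r$ children are born within $\tilde t$ of the last birth (Lemma \ref{lem: exclusiveFraction}).
\item Conditionally on the birth times, the children's subtrees are independent, and each late child has at most time $\tilde t+t$ to reproduce. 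Its in-degree is therefore dominated by a birth process run for that fixed time, whose law is tight uniformly in $r$. A binomial law of large numbers then shows that, with probability tending to one, at most a fraction $p<1$ of all children have in-degree at least $\alpha$ (Lemma \ref{lem: lowerbound}). This is exactly the hypothesis of Lemma \ref{lem: PageRankLowerBound}.
\end{enumerate}

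Note also that even a corrected Markov bound with fixed $K$ gives only a conditional success probability $1-\delta_K$, not $1-o(1)$. To reach the stated $(1+o(1))$ form you would have to absorb this constant into $\gamma$ via regular variation of the degree tail, or use a law of large numbers as the paper does.
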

We will prove the upper and the lower power-law bound separately in Propositions \ref{pro: Jordan} (upper bound) and \ref{pro: LowerBound} (lower bound) below. 
By Stirling's formula, as $s$ tends to infinity, 
\begin{equation*}
 \frac{\Gamma(s)}{\Gamma(s+\frac{2}{\varphi_k^{\boldsymbol{m}}}+1)}=\left(1+O(s^{-1})\right)s^{-\lambda_k^{\boldsymbol{m}}},
\end{equation*}
where $\lambda_k^{\boldsymbol{m}}:=1+\frac{2}{\varphi_k^{\boldsymbol{m}}}.$
In particular, by Theorem \ref{thm: Power-law bounds} and the Euler-Maclaurin formula, as $r$ tends to infinity, we obtain the following power-law bounds for PageRank:

\begin{cor}[Exponents in power-law bounds]\label{cor: exponents}
For $k \in S$,
\begin{align}
&\liminf_{n \rightarrow \infty} \mathbb{P}(R_{\phi_n}(G_n)>r \mid X_{\phi_n}=k) \geq (1+o(1))\frac{\Gamma(m_k+\frac{2}{\varphi_k^{\boldsymbol{m}}})}{\Gamma(m_k)}\gamma^{-(\lambda_k^{\boldsymbol{m}}-1)}r^{-(\lambda_k^{\boldsymbol{m}}-1)}\cr
&\limsup_{n \rightarrow \infty} \mathbb{P}(R_{\phi_n}(G_n)>r \mid X_{\phi_n}=k) \leq (1+o(1))\frac{\Gamma(m_k+\frac{2}{\varphi_k^{\boldsymbol{m}}})}{\Gamma(m_k)}r^{-(\lambda_k^{\boldsymbol{m}}-1)}.  
\end{align}
Thus, the exponent for the limiting power-law tails of the PageRank of a uniformly chosen vertex of color $k$ is $\lambda_k^{\boldsymbol{m}}-1=\frac{2}{\varphi_k^{\boldsymbol{m}}}$.
\end{cor}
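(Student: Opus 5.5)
The corollary follows directly from Theorem \ref{thm: Power-law bounds}. The only work is to evaluate the tail sums asymptotically. Abbreviate $a:=\frac{2}{\varphi_k^{\boldsymbol{m}}}>0$, so that $\lambda_k^{\boldsymbol{m}}=1+a$. Both bounds in the theorem contain the sum
\[
T(N):=\sum_{s=N}^\infty \frac{\Gamma(s)}{\Gamma(s+a+1)},
\]
with $N=\lceil r\rceil$ for the upper bound and $N=\lceil \gamma r\rceil$ for the lower bound. In both cases $N\to\infty$ as $r\to\infty$.

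The first step is to show that $T(N)=(1+o(1))\,a^{-1}N^{-a}$ as $N\to\infty$. I would not use Euler--Maclaurin directly. Instead I would use the exact telescoping identity
\[
\frac{\Gamma(s)}{\Gamma(s+a)}-\frac{\Gamma(s+1)}{\Gamma(s+a+1)}=\frac{\Gamma(s)\,\big[(s+a)-s\big]}{\Gamma(s+a+1)}=a\,\frac{\Gamma(s)}{\Gamma(s+a+1)}.
\]
By Stirling, $\Gamma(s)/\Gamma(s+a)\to 0$ as $s\to\infty$. Summing the identity from $s=N$ to $\infty$ therefore gives the closed form
\[
T(N)=\frac{1}{a}\,\frac{\Gamma(N)}{\Gamma(N+a)}=\frac{1}{a}\,(1+O(N^{-1}))\,N^{-a}.
\]
If one prefers the route announced in the paper, the same asymptotics follow from the Stirling estimate $\Gamma(s)/\Gamma(s+a+1)=(1+O(s^{-1}))s^{-(1+a)}$ together with a comparison of the sum with $\int_N^\infty x^{-(1+a)}\,dx=a^{-1}N^{-a}$. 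The error from the comparison is $O(N^{-1-a})$, which is $o(N^{-a})$.

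The second step is to substitute this into the prefactor of the theorem. There $\frac{2}{\varphi_k^{\boldsymbol{m}}}\cdot\frac1a=1$, so the prefactor $\frac{2\Gamma(m_k+a)}{\varphi_k^{\boldsymbol{m}}\Gamma(m_k)}$ times $T(N)$ equals $(1+o(1))\frac{\Gamma(m_k+a)}{\Gamma(m_k)}N^{-a}$. Since $\lceil r\rceil=r(1+o(1))$, we have $\lceil r\rceil^{-a}=(1+o(1))r^{-a}$, and likewise $\lceil\gamma r\rceil^{-a}=(1+o(1))\gamma^{-a}r^{-a}$ because $\gamma$ is a fixed constant. The $(1+o(1))$ factor already present in the lower bound of the theorem absorbs into the new one. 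This gives both displayed inequalities with exponent $a=\lambda_k^{\boldsymbol{m}}-1$.

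There is no genuine obstacle. The only point requiring care is that the $o(1)$ terms are uniform in the sense needed: they depend only on $r$ (through $N$) and not on $n$. This holds because the $\liminf_n$ and $\limsup_n$ have already been taken in Theorem \ref{thm: Power-law bounds}, and the right-hand sides there are deterministic functions of $r$. Since the upper and lower bounds decay like $r^{-a}$ with the same exponent and differ only in the constant factor $\gamma^{-a}$, the tail exponent of the PageRank of a uniformly chosen vertex of color $k$ is $\lambda_k^{\boldsymbol{m}}-1=\frac{2}{\varphi_k^{\boldsymbol{m}}}$.
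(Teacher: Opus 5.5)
Your proof is correct. It differs from the paper only in how you evaluate the tail sum.

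The paper applies Stirling's formula termwise, $\Gamma(s)/\Gamma(s+\frac{2}{\varphi_k^{\boldsymbol{m}}}+1)=(1+O(s^{-1}))s^{-\lambda_k^{\boldsymbol{m}}}$, and then compares the sum with an integral via Euler--Maclaurin. You instead use the telescoping identity, with your abbreviation $a=2/\varphi_k^{\boldsymbol{m}}$, to get the exact closed form $\sum_{s\ge N}\Gamma(s)/\Gamma(s+a+1)=a^{-1}\Gamma(N)/\Gamma(N+a)$. Consequently the upper bound in Theorem \ref{thm: Power-law bounds} equals exactly $\frac{\Gamma(m_k+a)}{\Gamma(m_k)}\frac{\Gamma(\lceil r\rceil)}{\Gamma(\lceil r\rceil+a)}$, and Stirling is needed only once, for a single ratio of Gamma functions.

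What your route buys:
\begin{itemize}
\item It is shorter and gives an explicit $O(1/r)$ relative error without any sum-versus-integral remainder estimate.
\item It explains the prefactor: taking $N=m_k$ gives $T(m_k)=a^{-1}\Gamma(m_k)/\Gamma(m_k+a)$. So the limiting degree law in Proposition \ref{pro: Jordan}(b) is correctly normalized, and its exact tail is $\frac{\Gamma(m_k+a)}{\Gamma(m_k)}\frac{\Gamma(N)}{\Gamma(N+a)}$.
\end{itemize}

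What the paper's route buys: Stirling plus Euler--Maclaurin does not depend on the special Gamma-ratio structure of the summand. It would go through unchanged for any summand with the same power-law asymptotics.
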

\section{Application to fairness in network sampling and simulations}\label{Sec: Applications}
In this section, we discuss the specific case of the PAM with a two-element set of colors. Structurally, this is the simplest non-trivial example of a multi-type PAM. From an application-oriented point of view, this set-up is relevant for modeling (un)fairness in social networks (see \cite{KaGeWaSiSt18}, the related \textit{biased PAM} of \cite{AvKeLoMaPePi15}, and \cite{StLiCh24}). If we choose $\mu_1<\mu_2$, then vertices of color 1 (displayed in red in the plots below) become a minority in the network, while vertices of color 2 (displayed in blue) form the majority. As an extension to the discussion in \cite[Chapter 6]{AnBaBhPi26} for the directed (tree) setting, in this section we will compare uniform sampling of vertices from the network to sampling by PageRank. More precisely, we will compare the proportion of minority vertices to the proportion of the total PageRank of all minority vertices among the total PageRank of all vertices in the network.\\ 
The outline is as follows. First in Section \ref{sec: example setup}, we introduce the relevant notational framework, and provide a three-dimensional plot of the parameter-dependent power-law exponents of the degrees and PageRanks. Afterwards, in Section \ref{sec: small-c approximation}, we present a second order expansion (in the damping factor $c$) of the proportion of the total PageRank of all minority vertices among the total PageRank of all vertices, and give explicit asymptotics for the linear term (see Proposition \ref{pro: MinorityFraction}). Based on numerical evaluations of these asymptotics presented in Figure \ref{fig: PageRankShares}, we finally formulate and discuss Conjecture \ref{conj: minority pr}.

\subsection{Different power-law exponents for PageRank in model with two colors} 
 \label{sec: example setup}
The following example has already been discussed in \cite[Corollary 4.5]{AnBaBhPi26} for the directed multi-type PAM, and for the particular case of color-independent out-degrees also in \cite[Section 2.1.3]{Jo13}. In addition to the discussions in \cite{AnBaBhPi26} and \cite{Jo13}, in Figure \ref{fig:lambda12} below, we provide a three-dimensional plot of the relevant power-law exponents in Corollary \ref{cor: exponents} as a function of the model parameters, while in Figure \ref{fig: SimulationOFTwoTypePAM}, we compare the results of an actual simulation of the network for specific choices of the model parameters to theoretical values.
 
 \begin{ex}[{\protect Consequence of \cite[Corollary 4.5]{AnBaBhPi26} and \cite[Section 2.1.3]{Jo13}}]\label{cor: parameters two-colored}
 Let $S=\{1,2\}$ consist of two colors and assume that the attractiveness function $\kappa$ is symmetric in that there is some $\varepsilon \in (0,1)$ such that
\begin{equation}\label{eq: symmetricKappa}
\kappa(1,2)=\kappa(2,1)=\varepsilon, \quad 
 \kappa(1,1)=\kappa(2,2)=1-\varepsilon.
\end{equation}
 Further we write $a:=\mu_1$, i.e., $\mu_2=1-a$.
 Then the power-law exponents $\lambda_1$ and $\lambda_2$ for the limiting degree distribution (and hence, also for the limiting root-PageRank) differ if and only if
  \[m_1a(M-\eta_1^{m})<m_2(1-a)\eta_1^{m},\]
  where $M:=am_1+(1-a)m_2$ denotes the expected out-degree
 In the particular case $m_1=m_2=m \in \N$, let $y=\tilde{\eta}_1$ denote the unique root in $(0,1)$ of the $m$-independent equation
 \begin{equation}\label{eq: example derivative}
\begin{split}
0=&2y^3(1-4\varepsilon(1-\varepsilon))+y^2(-2(1+a)+9\varepsilon+6a\varepsilon-10\varepsilon^2-4a\varepsilon^2)\cr
&\qquad+y(2(a-\varepsilon)-6a\varepsilon+3\varepsilon^2+4a\varepsilon^2)+a\varepsilon(1-\varepsilon).
\end{split}    
\end{equation}
Then $\eta_1^{\boldsymbol{m}}=m\tilde{\eta}_1$,
and
\begin{align}\label{eq: varphi12}
\varphi_{1,1}&=\frac{(1-\varepsilon)a}{(1-\varepsilon)\tilde{\eta}_1+\varepsilon(1-\tilde{\eta}_1)}, \qquad \varphi_{1,2}=\frac{\varepsilon(1-a)}{\varepsilon \tilde{\eta}_1+(1-\varepsilon)(1-\tilde{\eta}_1)},\\
\varphi_{2,1}&=\frac{\varepsilon a}{(1-\varepsilon)\tilde{\eta}_1+\varepsilon(1-\tilde{\eta}_1)}, \qquad\varphi_{2,2}=\frac{(1-\varepsilon)(1-a)}{\varepsilon \tilde{\eta}_1+(1-\varepsilon)(1-\tilde{\eta}_1)},\\
\varphi_1&=\varphi_{1,1}+\varphi_{1,2},\qquad \qquad \quad \, \qquad\varphi_2=\varphi_{2,1}+\varphi_{2,2}.
\end{align}
 \end{ex}
 \begin{rk}[Small-$\varepsilon$-expansion of $\eta_1$]
 In the set-up of Example \ref{cor: parameters two-colored}, the perturbation Ansatz $y(\varepsilon)=A+B\varepsilon+O(\varepsilon^2)$ for some $A,B$
 around the boundary case $\varepsilon=0$ gives,
 as $\varepsilon \downarrow 0$,\[\tilde{\eta}_1(\varepsilon)=a+\left(a-\frac{1}{2}\right)\varepsilon+O(\varepsilon^2).\]
 \end{rk}
\begin{figure}
    \centering
    \includegraphics[width=0.7\linewidth]{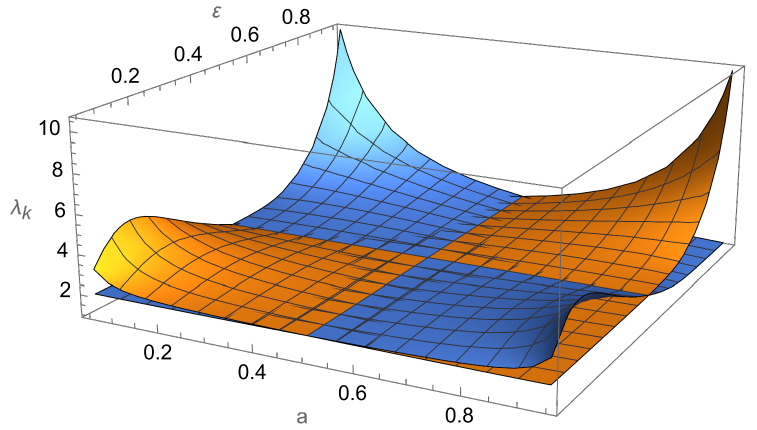}
    \caption{The values of the exponents $\lambda_1-1=\frac{2}{\varphi_1}$ (orange) and $\lambda_2-1=\frac{2}{\varphi_2}$ (blue) in the class-dependent power-law bounds in Corollary \ref{cor: exponents} for the case $m_1=m_2 \in \N$ in the set-up of Example \ref{cor: parameters two-colored}}.
    \label{fig:lambda12}
\end{figure}
\begin{figure}[t]
    \centering
    \subfloat[Homophilic setting: $\varepsilon=0.3$]{
\includegraphics[width=0.48\linewidth]{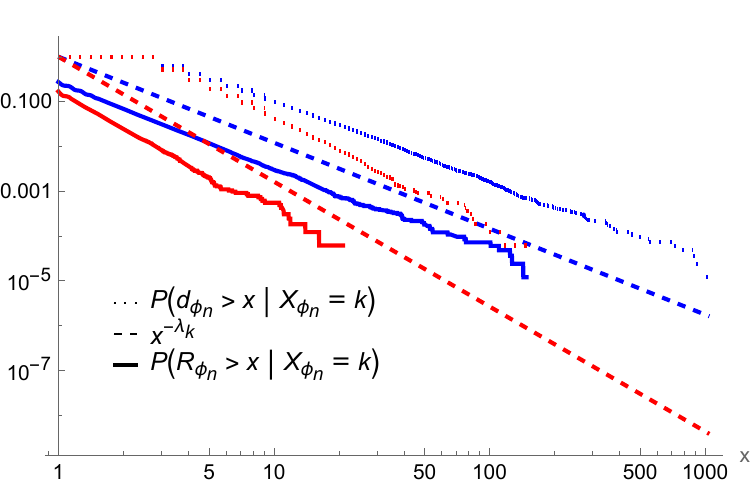}    
    }
\subfloat[Heterophilic setting: $\varepsilon=0.7$]{    \includegraphics[width=0.48\linewidth]{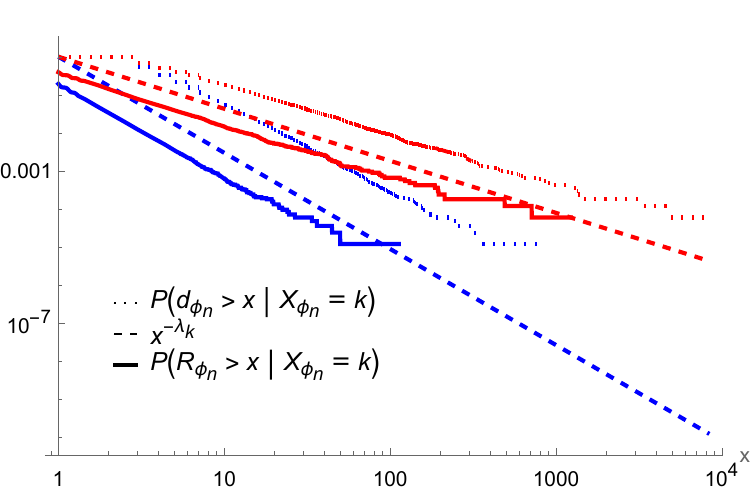}}
    \caption{Two simulations of a two-type PAM with $n+n_0=10^5$ nodes, $m_1=m_2=3$ and $\mu_1=\frac{1}{6}$, at different values of $\varepsilon$ (propensity to connect to a vertex of different color, see \eqref{eq: symmetricKappa}). Red color denotes quantities corresponding to vertices of class one (the minority) and blue color denotes class two (the majority).
 The thick lines correspond to the right-tails of the graph-normalized PageRanks (with $c=0.85$), the thinner dotted lines to the right-tails of the degrees and the dashed lines to the power laws with exponents $\lambda_{1,2}=\frac{2}{\varphi_{1,2}}-1$ with $\varphi_{1,2}$ as in Corollary \ref{cor: exponents} and \eqref{eq: varphi12}.}
    \label{fig: SimulationOFTwoTypePAM}
\end{figure}
\subsection{Fairness: Sampling uniformly, by PageRank, and by degree}\label{sec: small-c approximation}
Consider the graph-normalized total PageRank of the minority and majority vertices, respectively, given by
\begin{equation}
\begin{split} 
R_\mathrm{mino}(G_n)&=\sum_{v \in V(G_n) \colon X_v=1}R_v(G_n) \qquad \text{ and} \qquad
R_\mathrm{majo}(G_n)=\sum_{v \in V(G_n) \colon X_v=2}R_v(G_n). \end{split} \end{equation}
\paragraph{Small-$c$ approximation of the total minority PageRank.}
Iterating the PageRank equation once gives
\begin{equation}\label{eq: PRinClass}
 \begin{split}
R_\mathrm{mino}(G_n)&=\sum_{v \in V(G_n) \colon X_v=1}\left((1-c)+c\sum_{w \sim v}\frac{R_w(G_n)}{d_w(G_n)}\right)\cr
&=\sum_{v \in V(G_n) \colon X_v=1}\left((1-c)+c\sum_{w \sim v}\frac{1}{d_w(G_n)}\left((1-c)+c\sum_{\ell \sim w}\frac{R_{\ell}(G_n)}{d_\ell(G_n)}\right)\right).
 \end{split}   
\end{equation}
With the notations 
    \[N_\mathrm{mino}(G_n):=\#\{v \in V(G_n) \mid X_v=1 \} \text{ and } d^\mathrm{mino}_v(G_n):=\#\{w \in V(G_n) \mid w \sim v \text{ and } X_w=1\}
    \]
rearranging terms leads to 
\begin{equation}\label{eq: Rmino Expansion}
\begin{split}
R_\mathrm{mino}(G_n)&=(1-c)N_\mathrm{mino}(G_n)+c(1-c)\sum_{v \in V(G_n)}\frac{d_v^\mathrm{mino}(G_n)}{d_v(G_n)}+c^2\mathbbm{L}_c(G_n)\cr
&=(1-c)N_\mathrm{mino}(G_n)+c(1-c)n\mathbb{E}\Big[\frac{d_{\phi_n}^\mathrm{mino}}{d_{\phi_n}} \mid G_n\Big]+c^2\mathbbm{L}_c(G_n),
\end{split}    
\end{equation}
where $\phi_n$ is uniformly chosen from $V(G_n)$, and the remainder term $\mathbbm{L}_c$ is given by
\begin{equation*}
\begin{split}
\mathbb{L}_c&=\sum_{\ell \in V(G_n)}R_\ell(G_n)\sum_{v \in V(G_n) \colon X_v=1}\left(P(G_n)\right)^2_{\ell v} 
\end{split}
\end{equation*}
with $P_{\ell v}(G_n)=\frac{1}{d_\ell}1_{\ell \sim v}$ denoting the random walk transition matrix of $G_n$. 
We will now compare the large-$n$ asymptotics of the expected total PageRank of minority vertices $R_\mathrm{mino}(G_n)/(n+n_0)$
to $\mu_1$, which is the limiting proportion of minority vertices in the graph, for small values of the damping factor $c$. Proposition \ref{pro: MinorityFraction} below provides a first-order approximation in $c$ of this limit. 
Define
\begin{equation}\label{eq: E 1 door Dphi pro}
A_{k,\boldsymbol{m},\boldsymbol{\varphi}}:=\frac{2\Gamma(m_k+\frac{2}{\varphi_k^{\boldsymbol{m}}})}{\varphi_k^{\boldsymbol{m}} \Gamma(m_k)\Gamma(1+\frac{2}{\varphi_k^{\boldsymbol{m}}})}\left(\psi_1(1+\frac{2}{\varphi_k^{\boldsymbol{m}}})-\sum_{r=1}^{m_k-1}\frac{\beta(r,1+\frac{2}{\varphi_k^{\boldsymbol{m}}})}{r}\right),
\end{equation}
where $\beta(x,y):=\frac{\Gamma(x)\Gamma(y)}{\Gamma(x+y)}$ is the beta function and $\psi_1(z):=\frac{d^2}{dz^2}\log(\Gamma(z))$ denotes the \textit{trigamma} function.
\begin{pro}[Asymptotics of small-$c$ approximation]\label{pro: MinorityFraction}
For the two-element space $S=\{1,2\}$, any positive $\kappa: S \times S \to (0,\infty) $ and any $\mu_1 \in (0,1)$, with the quantities $(\varphi_{k,l}^{\boldsymbol{m}})_{k,l \in \{1,2\}}$ and $\eta_1^{\boldsymbol{m}}$ defined in \eqref{eq: LyapunovFunction} and \eqref{eq: varphik}, almost surely as $n\rightarrow \infty,$ 
\begin{equation}\label{eq: minority fraction pro}
\begin{split}
\mathbb{E}\Big[\frac{d_{\phi_n}^{\mathrm{mino}}}{d_{\phi_n}} \mid G_n\Big] \stackrel{n \rightarrow \infty}{\rightarrow} B^{\mathrm{mino}}_{k,\boldsymbol{m},\boldsymbol{\varphi}}:=\sum_{k=1}^2A_{k,\boldsymbol{m},\boldsymbol{\varphi}}\left(\eta_1^{\boldsymbol{m}}\varphi_{1,k}^{\boldsymbol{m}}-m_k\mu_k\frac{\varphi^{\boldsymbol{m}}_{k,1}}{\varphi^{m}_{k}}\right)+\mu_k\frac{\varphi^{\boldsymbol{m}}_{k,1}}{\varphi^{\boldsymbol{m}}_{k}}.
\end{split}
\end{equation}
In particular, by \eqref{eq: Rmino Expansion}, as $c \downarrow 0$,
\begin{equation}\label{eq: small-c final asymptotics}
\begin{split}
&\frac{R_\mathrm{mino}(G_n)}{n+n_0} \stackrel{n \to \infty}{\rightarrow}\mu_1+c\left(B^{\mathrm{mino}}_{k,\boldsymbol{m},\boldsymbol{\varphi}}-\mu_1\right)+O(c^2) \quad \text{almost surely}.
\end{split}    
\end{equation}
\end{pro}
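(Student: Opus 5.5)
The plan is to express $\mathbb{E}[d^{\mathrm{mino}}_{\phi_n}/d_{\phi_n}\mid G_n]$ as a functional of the local limit of $G_n$, computed through a decomposition of each vertex's edges into \emph{out}-edges (created by the vertex when it joined) and \emph{in}-edges (received later). For a vertex $v$ of color $k$ write $d_v=m_k+D^{\mathrm{in}}_v$ and $d_v^{\mathrm{mino}}=d^{\mathrm{mino,out}}_v+d^{\mathrm{mino,in}}_v$. The functional $d^{\mathrm{mino}}_v/d_v$ is bounded by $1$ and depends only on the $1$-neighbourhood of $v$ together with colors. Hence almost sure convergence of the empirical average follows once the joint empirical distribution of (color, out-degree, colors of out-neighbours, numbers of in-neighbours by color) converges almost surely. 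I will take this convergence from the directed local a.s.-limit of \cite[Theorem 4.3]{AnBaBhPi26}, which applies because the undirected graph is the directed one with orientations forgotten and out-degrees are fixed, together with the degree results of \cite[Theorems 2.1 and 2.2]{Jo13}.

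Next I identify the limiting law of this neighbourhood for a uniform root of color $k$. The in-degree $D^{\mathrm{in}}$ arises from a continuous-time (Yule-type) process. Its total rate at degree $d$ is $\tfrac{\varphi_k}{2}d$, and each new in-edge independently comes from a color-$l$ vertex with probability $\varphi_{k,l}/\varphi_k$. This splitting is consistent with the definition \eqref{eq: varphik}, since the rate from color $l$ is proportional to $m_l\mu_l\kappa(k,l)/\sum_{l'}\kappa(l',l)\eta_{l'}$. The root's age is exponential, so $D=m_k+D^{\mathrm{in}}$ has the law with $\mathbb{P}(D=s)=\frac{2\Gamma(m_k+2/\varphi_k)}{\varphi_k\Gamma(m_k)}\frac{\Gamma(s)}{\Gamma(s+2/\varphi_k+1)}$ for $s\ge m_k$, matching Theorem \ref{thm: Power-law bounds}. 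Conditionally on $D$, the in-edges carry i.i.d.\ colors, giving $\mathbb{E}[d^{\mathrm{mino,in}}\mid D]=(D-m_k)\varphi_{k,1}/\varphi_k$. Each out-edge of a color-$k$ vertex lands on color $1$ with probability $p_{k}:=\eta_1\kappa(1,k)/\sum_{l}\kappa(l,k)\eta_l$. This probability can be rewritten as $\eta_1\varphi_{1,k}/(m_k\mu_k)$, because $\eta_l$ is the limiting half-edge mass of color $l$ (here I will use the stationarity equations obtained from minimizing $V^{\boldsymbol m}$). Thus $\mathbb{E}[d^{\mathrm{mino,out}}]=m_k p_k=\eta_1\varphi_{1,k}/\mu_k$.

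Combining the two contributions and using $\mathbb{P}(X_\phi=k)=\mu_k$, the limit equals
\[
\sum_{k}\mu_k\,\mathbb{E}\Big[\frac{m_kp_k+(D-m_k)\varphi_{k,1}/\varphi_k}{D}\Big]
=\sum_k\Big(\mu_k\frac{\varphi_{k,1}}{\varphi_k}+\mu_k\,\mathbb{E}[1/D]\Big(\frac{\eta_1\varphi_{1,k}}{\mu_k}-m_k\frac{\varphi_{k,1}}{\varphi_k}\Big)\Big).
\]
This matches \eqref{eq: minority fraction pro}, provided $A_{k}=\mathbb{E}[1/D]$ up to the normalization by $\mu_k$ that must be reconciled with the stated formula. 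The remaining step is therefore to evaluate $\mathbb{E}[1/D]=\frac{2\Gamma(m_k+2/\varphi_k)}{\varphi_k\Gamma(m_k)}\sum_{s\ge m_k}\frac{\Gamma(s)}{s\,\Gamma(s+2/\varphi_k+1)}$ in closed form. I would write $\Gamma(s)/\Gamma(s+b+1)=\beta(s,b+1)/\Gamma(b+1)$ with $b=2/\varphi_k$, use $\beta(s,b+1)=\int_0^1 t^{s-1}(1-t)^b\,dt$, and sum $\sum_{s\ge m_k}t^{s-1}/s$. For $m_k=1$ this sum is $-\log(1-t)/t$, and the integral $\int_0^1 -\log(1-t)(1-t)^{b}t^{-1}dt$ gives $\psi_1(1+b)$ (trigamma) after expansion. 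For general $m_k$ one subtracts the terms $s=1,\dots,m_k-1$, which produces exactly $\sum_{r=1}^{m_k-1}\beta(r,1+b)/r$.

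Finally, \eqref{eq: small-c final asymptotics} follows from \eqref{eq: Rmino Expansion}. Dividing by $n+n_0$, $N_{\mathrm{mino}}/(n+n_0)\to\mu_1$ a.s. by the strong law, and the middle term converges by the above. The remainder satisfies $0\le\mathbb{L}_c\le\sum_\ell R_\ell=n+n_0$, since $P^2$ is stochastic, so $c^2\mathbb{L}_c/(n+n_0)=O(c^2)$ uniformly in $n$. I expect the main obstacle to be the first step. It requires justifying that the directed in-component limit of \cite{AnBaBhPi26} gives the joint law of in-neighbour colors and out-neighbour colors at the root, and that the convergence is almost sure rather than in probability. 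A secondary difficulty is verifying the identity $p_k=\eta_1\varphi_{1,k}/(m_k\mu_k)$ from the minimizer equations of $V^{\boldsymbol m}$.
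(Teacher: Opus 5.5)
Your overall architecture matches the paper's. You split $d^{\mathrm{mino}}_v$ into in- and out-contributions, and you obtain the in-part from the a.s.\ directed local limit together with binomial thinning of in-edge colors (Lemma \ref{lem: binomial}). You evaluate $\mathbb{E}^{\mathcal{T}}_k[1/(d^-_\phi+m_k)]$ via the beta integral and the trigamma function (Lemma \ref{lem: Exp1overDeg}), and you bound $\mathbb{L}_c$ by $n+n_0$.

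\textbf{The gap is the out-edge term.} Theorem \ref{thm: locallimitdirectedmtypepam} is a limit of \emph{incoming} neighborhoods $U_s(\vec G_n,v)$. It sees the root's color and its in-neighbors. The root's out-neighbors are never in $U_s(\vec G_n,v)$, because edges point from new to old and nothing reachable \emph{from} $v$ is included, so their colors are invisible to that limit. Moreover, \cite[Theorems 2.1 and 2.2]{Jo13} only give the color-conditioned degree law.

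Hence neither source yields the a.s.\ limit of the joint empirical law of $(X_v,d_v,d^{+,\mathrm{mino}}_v)$. In particular, neither justifies your factorization $\mathbb{E}[d^{\mathrm{mino,out}}/D]=m_kp_k\,\mathbb{E}[1/D]$. That factorization is an asymptotic-independence statement between the out-neighbor colors and the subsequent in-degree growth. The out-neighbor colors are fixed at birth, under the then-current $\tilde{\boldsymbol{Y}}^{\boldsymbol{m},(j)}$ rather than $\boldsymbol{\eta}^{\boldsymbol{m}}$. You flag this as the main obstacle, but the route you propose (reading it off \cite{AnBaBhPi26}) cannot work as stated.

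The paper closes this with a separate argument, Lemma \ref{lem: asymptotic uncorrelated}, which runs as follows:
\begin{enumerate}[(i)]
\item It tracks $Z^{\sss(n)}_{k,d,\omega}$, the fraction of color-$k$ vertices of degree $d$ with $\omega$ minority out-neighbors.
\item It uses that $\omega$ is frozen at birth, while such a vertex gains an edge with conditional probability $\frac{d}{2(n+n_0)}\varphi^{\boldsymbol{m}}_k+o_{\mathbb{P}}(1/n)$ irrespective of $\omega$.
\item A newborn enters class $(k,m_k,\omega)$ with probability $\mu_k\binom{m_k}{\omega}p^\omega(1-p)^{m_k-\omega}+o(1)$, where $p=p_{\eta^{\boldsymbol{m}}_1,k}$, because $\tilde{\boldsymbol{Y}}^{\boldsymbol{m},(n)}\to\boldsymbol{\eta}^{\boldsymbol{m}}$ a.s.
\item Jordan's stochastic-approximation Lemma \ref{lem: JordansLemma}, applied inductively in $d\geq m_k$, then gives the product-form limit \eqref{eq: LimitingJointDistribution}.
\item Scheff\'e's lemma and boundedness of $d^{+,\mathrm{mino}}_v/d_v$ turn this into a.s.\ convergence of the out-term.
\end{enumerate}
You need this, or an equivalent argument, to complete the proof.

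Two minor points. First, $m_kp_k=\eta^{\boldsymbol{m}}_1\varphi^{\boldsymbol{m}}_{1,k}/\mu_k$ follows directly from the definition \eqref{eq: varphik}; no stationarity equations of $V^{\boldsymbol{m}}$ are needed. Second, your final expression already coincides exactly with \eqref{eq: minority fraction pro} once $A_{k,\boldsymbol{m},\boldsymbol{\varphi}}=\mathbb{E}^{\mathcal{T}}_k[1/(d^-_\phi+m_k)]$, so no $\mu_k$-normalization remains to be reconciled.
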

Figure \ref{fig: PageRankShares} compares theoretical prediction of \eqref{eq: small-c final asymptotics} with a simulation result. 
\begin{con}\label{conj: minority pr}
Consider the two-colored PAM with $m_1=m_2=m$,
\begin{equation*}
\kappa(1,2)=\kappa(2,1)=\varepsilon,\qquad
 \kappa(1,1)=\kappa(2,2)=1-\varepsilon,
 \end{equation*}
 and probability $\mu_1$ of being a minority vertex. Then, for every $c>0$ sufficiently small, there is some $\delta(c)>0$ such that, for every $m \in \N$,
\begin{itemize}
     \item[$\rhd$] $\limsup_{n \to \infty}\frac{R_\mathrm{mino}(G_n)}{n+n_0}<\mu_1$ \ almost surely for every $0<\varepsilon<\frac{1}{2}-\delta(c)$ and $\mu_1 \in (0,\frac{1}{2});$ 
 \item[$\rhd$] $\liminf_{n \to \infty}\frac{R_\mathrm{mino}(G_n)}{n+n_0}>\mu_1$ \ almost surely for every $\varepsilon>\frac{1}{2}+\delta(c)$ and $\mu_1 \in (0,\frac{1}{2})$. 
  \end{itemize}
 In words, for sufficiently large graphs, the proportion of the total minority PageRank among the total PageRank $n+n_0$ is lower than the proportion of minority vertices in the network when the attachment kernel $\kappa$ is homophilic, while it is larger when $\kappa$ is  heterophilic.
\end{con}
\begin{figure}[t]
\centering
\subfloat[Homophilic setting: $\varepsilon=0.3$]{\includegraphics[width=0.48\textwidth]{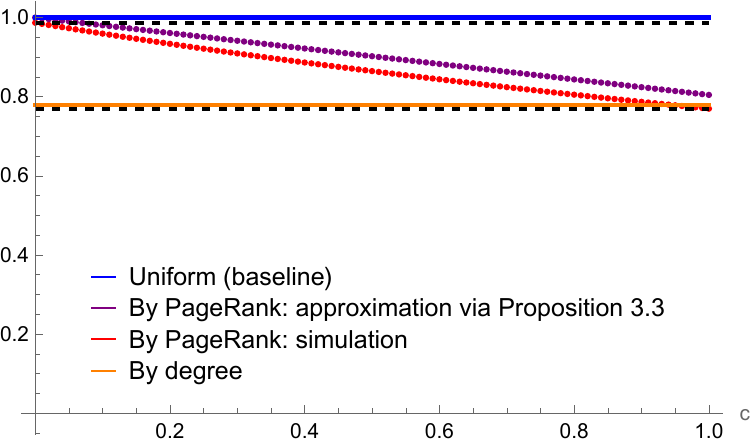}}\hfill
\subfloat[Heterophilic setting $\varepsilon=0.7$]{\includegraphics[width=0.48\textwidth]{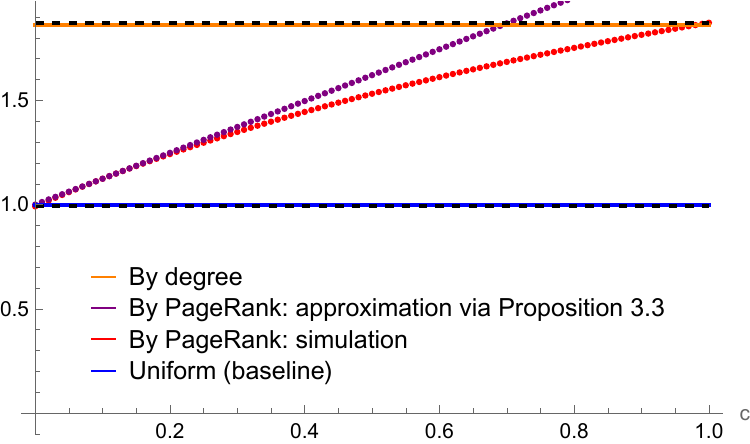}}\hfill
\caption{A comparison of the large-graph limiting probabilities to sample a minority vertex by uniform sampling (horizontal blue baseline at height 1) and by degree (horizontal orange line at level $\eta_1^{\boldsymbol{m}}/\mu_1$) at $\mu_1=\frac{1}{6}$.
In addition, the purple-colored curve shows the $c$-dependent $\frac{1}{\mu_1}$-rescaled theoretical limit for sampling by PageRank of the fraction obtained from ignoring the $O(c^2)$-contribution in \eqref{eq: small-c final asymptotics}, and inserting the expressions from Example \ref{cor: parameters two-colored}. For comparison, the red curve shows the value of $(R_\mathrm{mino}(G_n)/(n+n_0))/\mu_1$ obtained from sampling a network of size $n+n_0=10^5$ with $m_1=m_2=3$, and then computing the $c$-dependent values of $R_\mathrm{mino}(G_n)$ for $101$ equidistant points in $[0,1]$. 
The dashed horizontal lines show the proportion of minority vertices in the sampled network rescaled by a factor of $1/\mu_1$ and the proportion of the degree of minority vertices among the total degree rescaled by a factor of $1/\mu_1$.
}
\label{fig: PageRankVsC}
\end{figure}

\begin{figure}[t]
\centering
\subfloat[$m=1$]{\includegraphics[width=0.3\textwidth]{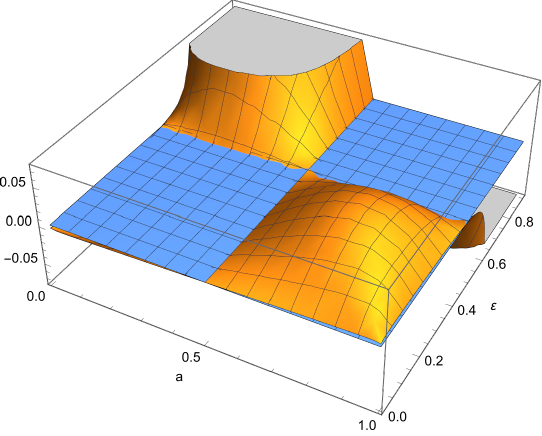}}\hfill
\subfloat[$m=3$]{\includegraphics[width=0.3\textwidth]{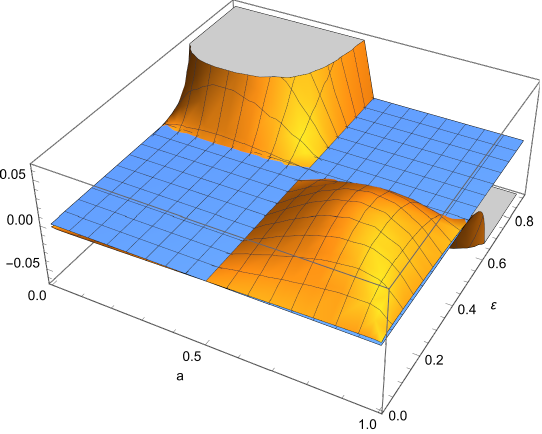}}\hfill
\subfloat[$m=20$] {\includegraphics[width=0.3\textwidth]{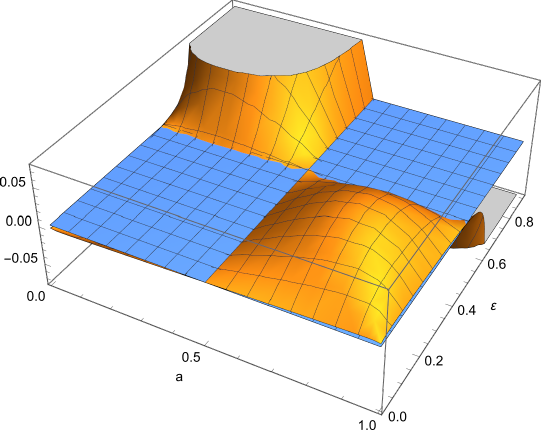}}
\caption{The first-order term $B^\mathrm{mino}_{k,\boldsymbol{m},\boldsymbol{\varphi}}-\mu_1$ in the small-$c$ expansion \eqref{eq: small-c final asymptotics} in Proposition \ref{pro: MinorityFraction} plotted as a function of $a=\mu_1$ and $\varepsilon=\kappa(1,2)=\kappa(2,1)$ at different values of $m=m_1=m_2$. The blue plane is at height $z=0$. Taking $c>0$ sufficiently small to control the $O(c^2)$ remainder in \eqref{eq: small-c final asymptotics}, the plots give rise to Conjecture \ref{conj: minority pr}.
It is known that $\boldsymbol{\eta}^{\boldsymbol{m}}$ and the $\varphi_{k}^{\boldsymbol{m}}$s are independent of $m$ in the case $m_1=m_2=m$ (cf. Corollary \ref{cor: parameters two-colored} and Remark 6 in \cite{AnBaBhPi26}). The plot suggests that in this case a similar independence of $m$ also holds for $B^\mathrm{mino}_{k,\boldsymbol{m},\boldsymbol{\varphi}}-\mu_1$.}
\label{fig: PageRankShares}
\end{figure}
    

\section{Proofs}\label{sec: proofs}
\subsection{Proof of Theorem \ref{thm: Power-law bounds}}
\subsubsection{Proof of power-law upper bound in Theorem \ref{thm: Power-law bounds}}\label{subsec: upper bound}
From \cite{AnBaBhPi26} we have the following result on the limiting degree distribution, which is a generalization from \cite{Jo13} to the case of color-dependent out-degrees:
For $k \in S$, let $\tilde{Y}^{\boldsymbol{m},(n)}_k(G_n)$ denote the total degree of color-$k$ vertices in $G_n$ rescaled by twice the number of vertices in $G_n$, i.e.,
\begin{equation}\label{eq: Y definition}
\tilde{Y}^{\boldsymbol{m},(n)}_k:=\tilde{Y}^{\boldsymbol{m},(n)}_k(G_n):=\frac{1}{2(n+n_0)}\sum_{v \in V(G_n)}\mathrm{1}\{X_v=k\}d_{v}(G_n), \quad k \in S,
\end{equation}
where in what follows we will omit the argument $G_n$ in the notation.
Introducing $\tilde{Y}^{\boldsymbol{m},(n)}_k$ allows for rewriting the attachment rule \eqref{eq: AttachmentRule} as follows:
\begin{equation}\label{eq: AttachmentEmpirical}
\mathbb{P}(v^{\sss(j)}_{n+1}\conn v \mid G_n,\ X_{n+1}=x_{n+1})=\frac{\kappa(X_v,x_{n+1})d_{v}(G_n)}{2(n+n_0)\sum_{k \in S}\kappa(k,x_{n+1})\tilde{Y}^{\boldsymbol{m},(n)}_k}. 
\end{equation}
We rely on the following key results proved in \cite{AnBaBhPi26} and \cite{HevHLi25a}:
\begin{pro}[{\cite[Lemma 4.1 and Theorem 4.4]{AnBaBhPi26}} and {\cite[Theorem 1.1]{HevHLi25a}}]\label{pro: Jordan}
~
\begin{enumerate}[(a)]
    \item The sequence of random vectors $(\boldsymbol{\tilde{Y}}^{\boldsymbol{m},(n)})_{n \in \N}$ converges almost surely to $\boldsymbol{\eta}^{\boldsymbol{m}}$ in \eqref{eq: LyapunovFunction}.
    \item For $k \in S$, recall $\varphi_k^{\boldsymbol{m}}$ in \eqref{eq: varphik}. 
    Then, for any $d \in \N_0$ and color $k \in S$, and for $\phi_n$ chosen uniformly at random from $V(G_n)$,  \begin{equation}\label{eq: LimitingDegDistribution}\mathbb{P}(d_{\phi_n}(G_n)=d \mid X_{\phi_n}=k) \stackrel{n \rightarrow \infty}{\longrightarrow} \frac{2\Gamma(m_k+\frac{2}{\varphi_k^{\boldsymbol{m}}})}{\varphi_k^{\boldsymbol{m}} \Gamma(m_k)}\frac{\Gamma(d)}{\Gamma(d+\frac{2}{\varphi_k^{\boldsymbol{m}}}+1)}.\end{equation}
    \item For any $r \geq 0$ and color $k \in S$,
    \begin{equation*}
     \limsup_{n \rightarrow \infty}\mathbb{P}(R_{\phi_n}(G_n)>r \mid X_{\phi_n}=k) \leq  \sum_{s=\lceil r \rceil}^\infty\frac{2\Gamma(m_k+\frac{2}{\varphi_k^{\boldsymbol{m}}})}{\varphi_k^{\boldsymbol{m}} \Gamma(m_k)}\frac{\Gamma(s)}{\Gamma(s+\frac{2}{\varphi_k^{\boldsymbol{m}}}+1)}.
    \end{equation*}
\end{enumerate}
\end{pro}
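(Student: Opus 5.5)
Parts (a) and (b) are quoted from \cite[Lemma 4.1 and Theorem 4.4]{AnBaBhPi26}, so the work is in (c), which I would derive from (b) and the deterministic bound of \cite[Theorem 1.1]{HevHLi25a}. For (a) I would only indicate how it fits the stochastic-approximation scheme of \cite{Jo13}. Using \eqref{eq: AttachmentEmpirical}, the expected one-step increment of $\boldsymbol{\tilde Y}^{\boldsymbol{m},(n)}$ is $\frac{1}{n}F(\boldsymbol{\tilde Y}^{\boldsymbol{m},(n)})$ plus a martingale difference of order $1/n$. The color-$k$ component of $F$ collects two contributions: the new vertex's own $m_k$ half-edges with weight $\mu_k$, and the incoming attachments proportional to $\kappa(k,l)y_k/\sum_{l'}\kappa(l',l)y_{l'}$. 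Its zeros are exactly the critical points of $V^{\boldsymbol{m}}$ on the simplex-type set, and $V^{\boldsymbol{m}}$ is a strict Lyapunov function for the ODE $\dot y=F(y)$. Convexity gives a unique interior minimizer, so the ODE method yields almost sure convergence to $\boldsymbol{\eta}^{\boldsymbol{m}}$. I would cite this rather than reprove it.

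For (b), the mechanism I would point to is the following. Once $\boldsymbol{\tilde Y}^{\boldsymbol{m},(n)}\approx \boldsymbol{\eta}^{\boldsymbol{m}}$, a color-$k$ vertex of degree $d$ receives a new edge at step $n+1$ with probability about $\frac{d}{2n}\sum_l \frac{m_l\mu_l\kappa(k,l)}{\sum_{l'}\kappa(l',l)\eta_{l'}^{\boldsymbol{m}}}=\frac{d\,\varphi_k^{\boldsymbol{m}}}{2n}$. The degree recursion for the expected number of color-$k$ vertices of degree $d$ then has, per vertex,
\[
p_d=\tfrac{\varphi_k^{\boldsymbol{m}}}{2}\big((d-1)p_{d-1}-d p_d\big) \text{ for } d>m_k, \qquad p_{m_k}=\frac{1}{1+m_k\varphi_k^{\boldsymbol{m}}/2}.
\]
Solving this gives the Gamma-ratio limit in \eqref{eq: LimitingDegDistribution}, and concentration around the mean follows by a bounded-differences martingale argument.

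For (c), use $R_v(G)\le d_v(G)$ for every vertex of an undirected graph, which is \cite[Theorem 1.1]{HevHLi25a}. Since degrees are integers, this gives
\[
\mathbb{P}(R_{\phi_n}>r\mid X_{\phi_n}=k)\le \mathbb{P}(d_{\phi_n}\ge \lceil r\rceil\mid X_{\phi_n}=k)=1-\sum_{d<\lceil r\rceil}\mathbb{P}(d_{\phi_n}=d\mid X_{\phi_n}=k).
\]
Here $\lceil r\rceil$ is the correct threshold: $d>r$ with $d$ an integer means $d\ge\lceil r\rceil$ when $r\notin\N$, and $d\ge\lceil r\rceil$ is weaker in general. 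The finite sum converges by (b). The limiting masses in \eqref{eq: LimitingDegDistribution} sum to one, which telescoping of the Gamma ratios confirms, so the limsup is bounded by the tail sum from $\lceil r\rceil$. This avoids any uniform-integrability issue.

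The main obstacle is (b), specifically the error control when $\boldsymbol{\tilde Y}^{\boldsymbol{m},(n)}$ is only asymptotically equal to $\boldsymbol{\eta}^{\boldsymbol{m}}$. This is handled in \cite{AnBaBhPi26} via the fringe, i.e.\ local, limit, and I would rely on that reference.
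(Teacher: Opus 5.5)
Your proposal is correct and follows the paper's proof: (a) and (b) are taken from the cited work, and (c) follows from the pointwise bound $R_v(G)\le d_v(G)$ combined with (b). Your step of writing the tail as one minus a finite sum, and then using that the limiting masses (read over $d\ge m_k$) sum to one, makes explicit what the paper calls a direct implication.
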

\begin{proof}
The first two statements are direct copies of the respective statements in \cite{AnBaBhPi26}.
For the third statement, note that by \cite[Theorem 1.1]{HevHLi25a}, PageRank in {\em any undirected graph} $G$ is pointwise bounded from above by the degree, i.e.,
\begin{equation}
R_v(G)\leq d_v(G) \quad \text{ for all vertices }v\in V(G). 
\end{equation}
In particular, the asymptotics in \eqref{eq: LimitingDegDistribution} directly implies a power-law upper bound for the limiting PageRank distribution at a uniformly chosen vertex given its color.
Now, Proposition \ref{pro: Jordan}(c) is nothing but the power-law upper bound in Theorem \ref{thm: Power-law bounds}.
\end{proof}
\begin{rk}[Interpretation of $(\eta^{\boldsymbol{m}}_k)_{k \in S}$ and $(\varphi_k^{\boldsymbol{m}})_{k \in S}$]\label{rk: interpretation of nu}
The vectors $(\eta^{\boldsymbol{m}}_k)_{k \in S}$ and $(\varphi_k^{\boldsymbol{m}})_{k \in S}$ have the following interpretations:
\begin{enumerate}[(a)]
    \item By Proposition \ref{pro: Jordan} (a), $\eta^{\boldsymbol{m}}_k$ describes the limiting proportion of edge ends of color $k$ (cf.\ \cite{Jo13}).   
\item By the attachment rule \eqref{eq: AttachmentEmpirical} and Proposition \ref{pro: Jordan}, for any $n \in \N$, $k,l \in S$ and $h \in [m_l],$ 
\begin{equation*}
\begin{split}
& \mathbb{P}(v_{n+1}^{\sss(h)} \text{ connects to vertex of color }k \text{ in } G_n \mid X_{n+1}=l)\cr 
&=\mathbb{E}\Big[\mathbb{P}(v_{n+1}^{\sss(h)} \text{ connects to vertex of color }k \text{ in } G_n \mid G_n,X_{n+1}=l)\mid X_{n+1}=l\Big]\cr 
&=\mathbb{E}\Big[\frac{\kappa(k,l) \tilde{Y}^{\boldsymbol{m},(n)}_k}{\sum_{l' \in S}\kappa(l',l)\tilde{Y}^{\boldsymbol{m},(n)}_{l'}}\mid X_{n+1}=l\Big]\stackrel{n \rightarrow \infty}{\longrightarrow} \frac{1}{\mu_lm_l}\eta^{\boldsymbol{m}}_k \varphi_{k,l}^{\boldsymbol{m}}.
\end{split}
\end{equation*}
\end{enumerate}
\end{rk}

\subsubsection{Proof of power-law lower bound in Theorem \ref{thm: Power-law bounds}}\label{subsec: lower bound}
We will make use of the known local limit for the in-component in the directed version of the multi-type PAM to prove the power-law lower bound for the present undirected version of the model.
In particular, the key ideas behind \cite[Proposition 1.8 and Corollary 1.9]{HevHLi25a} for sequences of undirected graphs are flexible enough to be adapted to the present case using the result of \cite[Theorem 4.3]{AnBaBhPi26} regarding the a.s.-local limit for the \textbf{directed} marked PAM.
\paragraph{A brief recap on local convergence.}
Let us begin with a brief recap on local convergence. For a more detailed overview we refer to \cite[Chapter 2 and Section 9.2]{vH24}.\\ First assume that $(\mathcal{G}_n)_{n \in \N}$ is any sequence of finite \textbf{undirected} graphs. Given any radius $s \in \N$ and any deterministic rooted graph $(\mathcal{H},\rho)$,  consider the random set 
\begin{equation}\label{eq: numberOfNeighborhoods}
\mathcal{N}_{(\mathcal{H},\rho),s}^{(n)}:=\{v \in V(\mathcal{G}_n) \mid B_{s}(\mathcal{G}_n,v) \cong (\mathcal{H},\rho) \} 
\end{equation}
of vertices $v$ for which the closed ball of radius $s$ around $v$ (with respect to the usual graph distance), viewed as a rooted graph with root $v$, is equivalent to $(\mathcal{H},\rho)$ under graph-isomorphisms that preserve the root.
Then we say that $(\mathcal{G}_n)_{n \in \N}$ converges \textbf{locally} (weakly/in probability/almost surely) to a random locally finite connected rooted graph $(\mathcal{G}^\star,\rho^\star)$ when, for all $s \in \N$ and $(\mathcal{H},\rho)$,
\begin{equation}\label{eq: localConvergence}
\frac{\#\mathcal{N}_{(\mathcal{H},\rho),s}^{(n)}}{\#V(G_n)} \stackrel{n \to \infty}{\rightarrow}\mathbb{P}(B_s(\mathcal{G}^\star,\rho^\star) \cong (\mathcal{H},\rho)) \quad \text{(weakly/in probability/almost surely).}\end{equation}
Next, for any sequence of finite \textbf{directed} graphs $(\vec{\mathcal{G}}_n)_{n \in \N}$, where each vertex carries an attribute or mark, the corresponding notion of local convergence to a locally finite rooted directed graph $(\vec{\mathcal{G}}_n,\rho)$ with respect to \textit{vertex-marked incoming neighborhoods} is obtained as follows:
In \eqref{eq: numberOfNeighborhoods} and \eqref{eq: localConvergence}, replace the ball $B_s(\mathcal{G}_n,v)$ by the \textit{incoming neighborhood} $U_s(\vec{\mathcal{G}}_n,v)$ consisting of all vertices from which the vertex $v$ can be reached by following a directed path of length at most $s$, where $\cong$ denotes equivalence under isomorphisms that preserve both the root and the marks of all vertices.\\
In the framework of this paper, the above mentioned marks are the colors of the vertices, which determine the out-degrees of the respective vertices.

\paragraph{Local limit for colored directed PAM as in \cite{AnBaBhPi26}.}
Consider the colored \textbf{directed} multi-type PAM $(\vec{G}_n)_{n \in \N}$ with set of colors $S=\{1,2,\ldots,q\}$, which we interpret as vertex marks, and fixed out-degree vector $\boldsymbol{m}$. For the directed version, the attachment rule is as in \eqref{eq: AttachmentRule}, but edges are now regarded as directed from new to old vertices.
Aligning as much as possible with the notations used in the previous sections, let us briefly summarize the key statement of \cite[Theorem 4.3]{AnBaBhPi26}.
\begin{defn}[Definition 4.2 in \cite{AnBaBhPi26}]
The characterization of the local limit of the colored directed PAM is as follows: 
\begin{itemize}
    \item[$\rhd$] For any color $k \in S$, let $\left(\mathrm{BP}_k(t)\right)_{t \geq 0}$, denote the continuous-time multi-type branching process started with an ancestor (root) $\phi$ of color $k$, where each individual of color $k' \in S$ reproduces independently at times following the law of a Markovian pure birth process $\xi_{k'}(\cdot)$ with birth rate a color-$l$ offspring at time $t$ given by $(\xi_{k'}(t)+m_{k'})\varphi_{k',l}$.
    \item[$\rhd$] For any (possibly random) color $X$ and time $T$, denote the progeny tree of $\mathrm{BP}_X(T)$ rooted in the ancestor, viewed as a directed, marked, rooted graph with each vertex marked by its individual color by $\mathcal{T}_X(T)$.
\end{itemize}
\end{defn}
\begin{thm}[{\protect \cite[Theorem 4.3]{AnBaBhPi26}}]\label{thm: locallimitdirectedmtypepam}
For fixed $\kappa$ and $\mu$,
\[\vec{G}_n \stackrel{a.s.-locally}{\longrightarrow}\mathcal{T}_X(\tau),\]
where $X \sim \mu$ and $\tau$ is an independent exponential random variable with rate $\lambda=2$.
Here, $\stackrel{a.s.-locally}{\longrightarrow}$ means local convergence almost surely with respect to vertex-marked incoming neighborhoods.
\end{thm}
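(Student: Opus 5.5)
Since this is \cite[Theorem 4.3]{AnBaBhPi26}, the plan below is a reconstruction of how one would prove it. The idea is the classical continuous-time embedding of preferential attachment, made multi-type by Proposition \ref{pro: Jordan}(a).

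\textbf{Step 1: in-degree growth of a fixed vertex.} Fix a vertex $v$ of color $k'$ with in-degree $j$, so that $d_v=j+m_{k'}$. By \eqref{eq: AttachmentEmpirical}, the expected number of color-$l$ children that $v$ receives at step $n+1$ is
\[
\mu_l m_l\,\frac{\kappa(k',l)(j+m_{k'})}{2(n+n_0)\sum_{l'}\kappa(l',l)\tilde Y^{\boldsymbol m,(n)}_{l'}}.
\]
By Proposition \ref{pro: Jordan}(a) and \eqref{eq: varphik}, this equals $(1+o(1))\,(j+m_{k'})\varphi_{k',l}/(2n)$ almost surely.

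\textbf{Step 2: time change and the root's age.} Introduce the clock $t(i)=\tfrac12\log i$. In this clock, the in-degree process of a vertex born at step $i$ and observed at step $n$ is approximately the pure birth process $\xi_{k'}$ run for time $\tfrac12\log(n/i)$. For a uniform vertex, $i/n$ is asymptotically $U\sim\mathrm{Unif}(0,1)$. Hence $\tfrac12\log(1/U)\sim\mathrm{Exp}(2)$, which explains $\tau$. The root's color is asymptotically $\mu$-distributed by the law of large numbers, since the initial graph has only $n_0$ vertices.

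\textbf{Step 3: the whole in-neighborhood.} A child born at BP-time $s$ (step $\approx n e^{-2(\tau-s)}$) has its own in-neighborhood evolving for the remaining time $\tau-s$. This is exactly the recursive structure of $\mathcal T_X(\tau)$. I would make this precise by coupling the explored depth-$r$ incoming neighborhood of a uniform vertex with the truncated branching process. The coupling goes step by step: a Bernoulli attachment with success probability $(j+m)\varphi/(2n)(1+o(1))$ is coupled with a Poisson clock of the same rate. Distinct vertices in a bounded neighborhood interact only through $o(1)$ corrections, because hitting any fixed vertex has probability $O(1/n)$ per step.

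To keep the coupling error summable, I would truncate on events where all degrees in the neighborhood are at most $K$ and all birth times satisfy $i\ge \delta n$, and then let $K\to\infty$, $\delta\downarrow0$. This yields convergence of $\mathbb E\big[\#\mathcal N^{(n)}_{(\mathcal H,\rho),s}/n\big]$, with incoming neighborhoods in place of balls, to $\mathbb P(U_s(\mathcal T_X(\tau))\cong(\mathcal H,\rho))$.

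\textbf{Step 4: almost-sure convergence.} I would upgrade from expectation to almost sure convergence by concentration. Consider the Doob martingale of the (truncated) neighborhood count with respect to the filtration of the graph process. Changing the choices made at one step alters only the neighborhoods of vertices within incoming distance $s$ of the affected edges. Under degree truncation this is $O(K^s)$ vertices. Azuma–Hoeffding then gives bounds of the form $\exp(-c\,\varepsilon^2 n/K^{2s})$, and Borel–Cantelli finishes the argument.

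\textbf{Main obstacle.} I expect the hardest step to be Step 3 together with the concentration in Step 4. The normalisation $\sum_{l'}\kappa(l',l)\tilde Y^{\boldsymbol m,(n)}_{l'}$ is random, and its almost-sure convergence from Proposition \ref{pro: Jordan}(a) comes without a rate. Integrated over $\log n$ time scales, these fluctuations must not distort the time change. I would first try to handle this by working on the event $\{|\tilde{\boldsymbol Y}^{\boldsymbol m,(i)}-\boldsymbol\eta^{\boldsymbol m}|<\varepsilon \text{ for all } i\ge \delta n\}$. On this event the rates are sandwiched between $(1\pm C\varepsilon)$ multiples of the limiting rates, so the process can be bracketed by branching processes run with slightly perturbed rates. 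Letting $\varepsilon\downarrow0$ then closes the argument.
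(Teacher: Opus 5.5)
The gap is in Step 4. Your bounded-difference claim counts only the \emph{direct} effect of altering step $t$. That effect is in fact $O((\max_k m_k)^s)$ independently of $K$, because out-degrees are bounded and only vertices reachable from the altered targets by out-paths of length at most $s$ see a different in-neighbourhood. The bound $c_t$ on the Doob increment $|\mathbb E[N\mid\mathcal F_t]-\mathbb E[N\mid\mathcal F_{t-1}]|$ must also account for the fact that the step-$t$ choice enters every later attachment probability. It does so through the degrees of the affected vertices and through the normalisations $\sum_v\kappa(X_v,l)d_v(G_s)$, so you have to couple two futures started from different $G_t$.

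In the one-type PAM this is harmless: redirecting an edge from $u$ to $w$ leaves the normalisation unchanged, the two copies can be kept different only at $u$ and $w$, and $c_t$ is bounded. Here it is not.
\begin{itemize}
\item If $X_u\neq X_w$, the normalisations of the two copies differ. At every later step $s$ the copies then disagree outside $\{u,w\}$ with probability of order $1/s$ (driven by $\kappa(X_u,l)-\kappa(X_w,l)$). This creates new discrepant vertices, which leak in turn.
\item If the altered choice is the colour of $v_t$ ($k$ versus $k'$), its attractiveness differs forever. The expected number of later edges it absorbs then differs by order $(n/t)^{\max(\varphi_k^{\boldsymbol m},\varphi_{k'}^{\boldsymbol m})/2}$ when $\varphi_k^{\boldsymbol m}\neq\varphi_{k'}^{\boldsymbol m}$. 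So does the number of edges displaced onto other vertices, each of which changes their in-neighbourhoods.
\end{itemize}
So the natural coupling bound on $c_t$ grows polynomially in $n/t$ rather than being $O(K^s)$. The estimate $\exp(-c\varepsilon^2n/K^{2s})$ is therefore unjustified, and whether $\sum_{t\le n}c_t^2$ is still $o(n^2/\log n)$ depends on growth exponents you have not controlled.

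The birth-time truncation you already use in Step 3 repairs this, provided you build it into the martingale. Count only vertices born after step $\delta n$; the others contribute at most $\delta+o(1)$ to the fraction. Run the Doob martingale only over $t\in(\delta n,n]$, conditionally on $\mathcal F_{\delta n}$. Every unit of discrepancy, including the degree of a recoloured $v_t$, grows or spawns new discrepancies at rate $O(1/s)$. A crude Gronwall-type coupling therefore gives $c_t\le C\delta^{-C'}$ for these $t$, and Azuma plus Borel--Cantelli yield $(N_\delta-\mathbb E[N_\delta\mid\mathcal F_{\delta n}])/n\to0$ almost surely.

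What remains is a \emph{quenched} version of Step 3, i.e.\ almost sure convergence of $\mathbb E[N_\delta\mid\mathcal F_{\delta n}]/n$. Your $\varepsilon$-tube argument delivers this if you stop the coupling at the exit time from the tube, instead of conditioning on a non-adapted event. You also need that $\mathbb P(\sup_{i\ge\delta n}|\tilde{\boldsymbol Y}^{\boldsymbol m,(i)}-\boldsymbol\eta^{\boldsymbol m}|>\varepsilon\mid\mathcal F_{\delta n})\to0$ almost surely, which follows from L\'evy's upward theorem. Also, ``bracketing'' should be read as total-variation continuity of the bounded-horizon branching process in its rates, since $\{U_s\cong(\mathcal H,\rho)\}$ is not a monotone event.

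For comparison, the paper does not prove this theorem but imports it from \cite{AnBaBhPi26}. That fringe-convergence route sidesteps both of your obstacles:
\begin{itemize}
\item The proportion of vertices with a given marked in-neighbourhood type satisfies a recursion of exactly the form in Lemma \ref{lem: JordansLemma}, with bounded martingale noise. Compare the paper's treatment of $Z^{\sss(n)}_{k,d,\omega}$ in the proof of Lemma \ref{lem: asymptotic uncorrelated}.
\item One inducts on the size of the type, and the rate-free almost sure convergence of $\tilde{\boldsymbol Y}^{\boldsymbol m,(n)}$ enters only through $A_n\to a$.
\item Almost sure convergence then comes out directly, with no need to control how one step propagates through the future dynamics.
\end{itemize}
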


\paragraph{Truncated first-order undirected PageRank on directed graph.}
We want to obtain a lower bound on the PageRank in the undirected multi-type PAM by restricting to first-order terms associated with incoming edges. The latter quantity is a function of the incoming neighborhood of radius two of the considered vertex and its color and thus its limit in distribution is determined by the known directed local limit. 
Throughout the remainder of this paper we denote the in-degree of any vertex $v$ by $d_v^-$ , and the out-degree of vertex $v$ by $d_v^+$.
We next define this truncated first-order lower bound:
 \begin{defn}[Truncated first-order undirected PageRank on directed graph]\label{def: TruncatedPageRank}
 {\normalfont \rmfamily} 
We define the \textbf{truncated undirected PageRank on the directed multi-type PAM} $\vec{G}_n$ by
\begin{equation*}
R^{1,\mathrm{dirTr}}_v(\vec{G}_n):=c(1-c)\sum_{\substack{u \in V(\vec{G}_n)\\u \rightarrow v}}\frac{1}{m_{X_u}+d_u^-},    \quad v \in \vec{G}_n,
\end{equation*}
where $u \rightarrow v$ indicates that the sum runs over all vertices $u$ that have an edge directed to $v$.
\end{defn}
Clearly, for every $n \in \N$ and vertex $v$,
\begin{equation}\label{eq: PageRank-DirVsUndir}R_v(G_n) \geq R^{1,\mathrm{dirTr}}_v(\vec{G}_n),
\end{equation}
where we regard $G_n$ as the result of the natural embedding of the set of directed rooted graphs into the set of rooted undirected graphs, by copying the set of vertices and replacing all oriented edge by  unoriented edges. The directed local convergence in Theorem \ref{thm: locallimitdirectedmtypepam} implies the following convergence:
\begin{pro}\label{pro: LowerBound}
The following holds true:
\begin{enumerate}[(a)]
    \item The sequence $(R^{1,\textup{dirTr}}_{\phi_n}(\vec{G}_n))_{n \in \N}$ converges in distribution to
\[R^{1,\textup{dirTr}}_\phi(\mathcal{T}_X(\tau)):=c(1-c)\sum_{u \rightarrow \phi}\frac{1}{m_{X_u}+d_u^-}.\]
\item Fix $k \in S$. Conditionally on $X_{\phi_n}=k$, the sequence $(R^{1,\textup{dirTr}}_{\phi_n}(\vec{G}_n))_{n \in \N}$ converges in distribution to
\[R^{1,\textup{dirTr}}_\phi(\mathcal{T}_k(\tau)):=c(1-c)\sum_{u \rightarrow \phi}\frac{1}{m_{X_u}+d_u^-},\]
where the r.h.s. refers to the process $\mathcal{T}_X(\tau)$ conditioned on $X=k$.
\item For $k \in S$, there exists some finite $\beta$ such that, as $r \rightarrow \infty$,
\[\mathbb{P}(R^{1,\textup{dirTr}}_\phi(\mathcal{T}_X(\tau))>r \mid X_\phi=k) \geq  (1+o(1)) \;\mathbb{P}(d_{\phi}^->\beta r \mid X_\phi=k).\]
\end{enumerate}
In particular, conditionally on $X_\phi=k$, the random variable $R^{1,\textup{dirTr}}_\phi(\mathcal{T}_X(\tau))$ satisfies a power law with the same $k$-dependent exponent as the (in-)degree.
\end{pro}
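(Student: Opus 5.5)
For (a) and (b) the plan is to use the continuous mapping principle for local convergence. The quantity $R^{1,\mathrm{dirTr}}_v(\vec G_n)$ depends only on a small amount of information: the marks (colors) of the in-neighbours $u$ of $v$, and their in-degrees $d_u^-$. Here $d_u^+=m_{X_u}$ is determined by the mark. All of this is read off from the vertex-marked incoming neighbourhood $U_2(\vec G_n,v)$. Hence there is a function $h$ on (equivalence classes of) marked rooted directed graphs of depth two with
\[
R^{1,\mathrm{dirTr}}_v=h(U_2(\vec G_n,v)).
\]
For every $r\ge0$, the set $\{h>r\}$ is a countable union of isomorphism classes $(\mathcal H,\rho)$. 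By Theorem \ref{thm: locallimitdirectedmtypepam}, each class has limiting frequency $\mathbb P(U_2(\mathcal T_X(\tau),\phi)\cong(\mathcal H,\rho))$ almost surely. Fatou's lemma gives the $\liminf$ bound for the union, and applying the same argument to the complement gives the matching $\limsup$. This is the standard fact that a.s.\ local convergence implies convergence in distribution of bounded-radius functionals, and it yields (a).

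For (b) I will restrict to classes whose root carries mark $k$. The frequency of these classes converges to $\mu_k>0$, and dividing by this frequency gives the conditional statement.

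For (c) the idea is to lower bound the sum by keeping only the in-neighbours whose own in-degree is small. In $\mathcal T_k(\tau)$, the children $u$ of the root $\phi$ are born at times $\sigma_u\le\tau$. Conditionally on the root's birth process, each child's subtree is an independent $\mathrm{BP}_{X_u}$ run for the residual time $\tau-\sigma_u$. Its in-degree $d_u^-=\xi_{X_u}(\tau-\sigma_u)$ is stochastically dominated by $\xi_{\max}(\tau)$, a pure birth process with rate $(j+m_{\max})\varphi_{\max}$ run for time $\tau$. Here
\[
m_{\max}=\max_l m_l,\qquad \varphi_{\max}=\max_l\varphi_l .
\]
I will fix a large $K$ and set $p_K:=\mathbb P(\xi_{\max}(\tau)\le K)$. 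I also need independence of $\tau$ from the subtree. To get it, I will condition on $\tau$ and use that $\tau$ is the same for all siblings; since I only need a lower bound, I will instead restrict to the event $\{\tau\le t_0\}$, or argue conditionally on $\tau=t$ with $p_K(t)$.

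On the event that at least $p_K d_\phi^-/2$ children have $d_u^-\le K$, we get
\[
R^{1,\mathrm{dirTr}}_\phi\ge c(1-c)\,\frac{p_K}{2}\,\frac{d_\phi^-}{m_{\max}+K}.
\]
Conditionally on $\tau$ and on the root's offspring, the indicators $\mathbb 1\{d_u^-\le K\}$ are independent Bernoullis with success probability at least $p_K(\tau)$. A Chernoff bound then shows that the probability of failing this event, given $d_\phi^-=s$, is at most $e^{-c's}$. Set $\beta^{-1}:=c(1-c)p_K/(2(m_{\max}+K))$. Then
\[
\mathbb P(R>r\mid X_\phi=k)\ \ge\ \mathbb P(d_\phi^->\beta r\mid X_\phi=k)-\sup_{s>\beta r}e^{-c's}.
\]
The second term is exponentially small, while the first decays polynomially, with the exponent given by Proposition \ref{pro: Jordan}(b), since the root's in-degree distribution matches the limit degree minus $m_k$. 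This gives (c), and the "in particular" statement follows by combining (c) with \eqref{eq: PageRank-DirVsUndir}.

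The main obstacle is the dependence on $\tau$, which is unbounded. Large $d_\phi^-$ forces $\tau$ to be large, and then $p_K(\tau)$ is small. I would handle this by using the residual time rather than $\tau$ itself. Given $\tau$, the birth times of the root's children are biased towards late times, since the rate is increasing. A positive fraction of them, about half, fall in $[\tau-1,\tau]$ with probability tending to one. For those children the residual time is at most $1$, so the bound
\[
\mathbb P(\xi_{\max}(1)\le K)=:p_K>0
\]
holds uniformly. With this choice the Chernoff argument goes through, with a fraction $1/4$ in place of $1/2$.
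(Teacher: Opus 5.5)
Your (a) and (b) are the paper's argument: a bounded functional of the depth-two marked incoming neighbourhood, combined with Theorem~\ref{thm: locallimitdirectedmtypepam}. Your plan for (c) is also the paper's strategy. Lemma~\ref{lem: PageRankLowerBound} keeps only the root's children with small in-degree. Lemmas~\ref{lem: lateStop}--\ref{lem: lowerbound} then restrict to children born in a final time window, bound their in-degrees uniformly in distribution, and concentrate the conditionally independent indicators. Your variants refine details rather than change the route. Conditioning on $\tau$ itself and using the exact residual times $\tau-\sigma_u$ avoids the slack $t$ of Lemma~\ref{lem: lateStop}. Dominating $d_u^-$ by the linear birth process $\xi_{\max}(1)$ is also the correct comparison: a Poisson variable, as used to define $\alpha$ in Lemma~\ref{lem: lowerbound}, does not stochastically dominate a linear birth process.

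The gap is the claim that about half of the $r$ children fall in $[\tau-1,\tau]$, which you assert without proof and misstate. The fraction is asymptotically $1-e^{-\varphi_k}$, which can be small. So replace $1/2$ and $1/4$ by $f:=1-e^{-\varphi_k}$ and $f/2$, or lengthen the window. The paper spends Lemmas~\ref{lem: lateStop} and~\ref{lem: exclusiveFraction} (via Chebyshev) on exactly this step, but in your setup it has a short proof:
\begin{itemize}
\item Each birth raises the root's birth rate by exactly $\varphi_k$. Hence the joint density of births at $s_1<\dots<s_r\leq t$, with no birth in $(s_r,t]$, is proportional to $e^{\varphi_k(s_1+\dots+s_r)}$.
\item So, conditionally on $\tau=t$ and $d_\phi^-=r$, the birth times are the order statistics of $r$ i.i.d.\ variables with density proportional to $e^{\varphi_k s}$ on $[0,t]$.
\item The number of children born in $[t-1,t]$ is therefore $\mathrm{Bin}(r,q_t)$ with $q_t\geq 1-e^{-\varphi_k}$, uniformly in $t$.
\item A Chernoff bound for this count, plus your Chernoff bound for the in-degree indicators (conditionally independent given $\tau$ and the root's process), bounds the failure probability by $e^{-c's}$ uniformly in $\tau$.
\end{itemize}
This justifies your subtraction of $\sup_{s>\beta r}e^{-c's}$. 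If you only had $o(1)$ control of this event, as with Chebyshev, you would instead need the multiplicative bound of Lemma~\ref{lem: PageRankLowerBound}:
\[
\mathbb{P}(d_\phi^->\beta r,\ \text{failure})\leq \mathbb{P}(d_\phi^->\beta r)\sup_{s>\beta r}\mathbb{P}(\text{failure}\mid d_\phi^-=s).
\]
Finally, the upper half of the ``in particular'' claim follows from the trivial bound $R^{1,\mathrm{dirTr}}_\phi\leq c(1-c)\,d_\phi^-$, since each summand is at most $1$. It does not follow from \eqref{eq: PageRank-DirVsUndir}, which only compares with the undirected PageRank.
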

\begin{proof}[Proof of Proposition \ref{pro: LowerBound}]
For any fixed $r \in \R$, the indicator $\mathbf{1}\{R^{1,\mathrm{dirTr}}_\phi > r\}$ is a bounded continuous function on the space of marked rooted directed connected graphs up to root-preserving isomorphism (cf.\ \cite[Section 2.2]{vH24}) which only depends on the incoming neighborhood of finite radius two. Hence, the stated convergence follows from Theorem \ref{thm: locallimitdirectedmtypepam} and \cite[Theorem 2.15]{vH24}, which relates local convergence in the sense of \eqref{eq: localConvergence} to convergence of (conditional) expectations of bounded continuous functions. This proves part (a).
To prove part (b), we consider the indicator $\mathbf{1}\{X_{\phi}=k \text{ and }R^{1,\mathrm{dirTr}}_\phi > r\},$ which again is a bounded continuous function on the space of marked rooted directed connected graphs up to to root-preserving isomorphism.\\
The proof of part (c) is more intricate in that we need to control degree correlations, and relies on Lemmas \ref{lem: PageRankLowerBound}--\ref{lem: lowerbound} below.
In particular, Lemma \ref{lem: PageRankLowerBound} provides a sufficient criterion  that ensures that the distribution of $R^{1,\textup{dirTr}}_\phi(\mathcal{T}_k(\tau))$ has a power-law lower bound with the same exponent as that of the distribution of $d_\phi^-$. This criterion is based on controlling the degrees of the children of $d_\phi$ when conditioning on $d_\phi^-=r$ and letting $r \to \infty$. Note that as $r$ increases, also the expected value of the time $\tau$ at which the process is stopped increases. To address this issue, for given $t>0$ the Lemmas \ref{lem: lateStop} and \ref{lem: exclusiveFraction} control the proportion of those children of $\phi$ that are born in the time-interval $[\tau-t,\tau]$. The distribution of the degrees of these children then can be uniformly bounded stochastically. Finally, Lemma \ref{lem: lowerbound} verifies the criterion of Lemma \ref{lem: PageRankLowerBound}. We will postpone the proofs of Lemmas \ref{lem: PageRankLowerBound}, \ref{lem: lateStop},\ref{lem: exclusiveFraction} and \ref{lem: lowerbound} to the end of this section and first finish the proof of Proposition \ref{pro: LowerBound}.
\\ 
To formalize the reasoning outlined above, for $\alpha>0$, we first define
   \begin{equation*}
        \dphigeqalphadir=\#\{v \in \mathcal{T}_X(\tau)\mid v \rightarrow \phi \text{ and } d^-_v \geq \alpha\}
    \end{equation*}
and, for $k \in S$, we let $\mathbb{P}^\mathcal{T}_k(\cdot):=\mathbb{P}^\mathcal{T}(\cdot \mid X_\phi=k)$ denote the distribution of the branching process $\mathcal{T}_X(\tau)$ conditioned on the root having color $k$.
\begin{lemma}[Root-PageRank lower bound (cf.\ \cite{HevHLi25a})]\label{lem: PageRankLowerBound} 
Assume that, for $k \in S$, there exist $\alpha>0$ and $p \in(0,1)$ such that, as $r\rightarrow \infty,$
\begin{equation}\label{eq: LowerBoundMainAssumtion}
\mathbb{P}_k^{\mathcal{T}}\left( \dphigeqalphadir 
\geq pd^-_\phi \mid d_\phi^-=r\right) \stackrel{r \to \infty}{\rightarrow} 0.
\end{equation}
Then, with $\beta:=\frac{m_k+\alpha}{(1-p) c(1-c)}$, the random variable $R^{1,\textup{dirTr}}_\phi$ satisfies 
\begin{equation}\label{cond: o of dphi>K}
\mathbb{P}_k^{\mathcal{T}}(R^{1,\textup{dirTr}}_\phi>r) \geq (1+o(1))\mathbb{P}_k^{\mathcal{T}}(d_\phi^->\beta r) \quad \text{as } r \rightarrow \infty.   
\end{equation}
\end{lemma}
The proof of Lemma \ref{lem: PageRankLowerBound} will be given at the end of this section.
To verify the assumption \eqref{eq: LowerBoundMainAssumtion} in Lemma \ref{lem: PageRankLowerBound} above, we will make use of the fact that for every $k \in S$ \enquote{most} of the children of the root $\phi$ in $\mathcal{T}_k(\tau)$ are born near the end of the time interval $[0,\tau]$. In particular, consider the independent family $(\tilde{\tau}_k^{\sss(j)})_{j \in \N}$ with $\tilde{\tau}_k^{\sss(j)}\sim \mathrm{Exp}((j+m_k)\varphi_k^{\boldsymbol{m}})$. Here, $\tilde{\tau}_k^{\sss(j)}$ describes the waiting time in between the births of the $j-1$th child and the $j$th child of $\phi$ conditionally on $X_\phi=k$.
We first note the following:
\begin{lemma}\label{lem: lateStop}
 For every $t>0, \alpha>0$ and $k \in S$,
\begin{equation*}
\begin{split}\limsup_{r \to \infty}\mathbb{P}_k^{\mathcal{T}}\left( \dphigeqalphadir 
\geq pd^-_\phi \mid d_\phi^- =r\right) \leq \limsup_{r \to \infty}\mathbb{P}_k^{\mathcal{T}}\left( \dphigeqalphadir 
\geq pd^-_\phi \mid d_\phi^- =r, \ \tau = \sum_{j=1}^r \tilde{\tau}_k^{\sss(j)}+t\right)
\end{split}
\end{equation*}
\end{lemma}
The proof of Lemma \ref{lem: lateStop} will be given at the end of this section.
Next, for fixed $\tilde{t}>0$, define
\begin{equation}\label{eq: ellDefinition}
 \ell_{k,\tilde{t}}^{\sss(r)}:=\inf\{l \in \{1,2,\ldots,r\} \mid \sum_{j=l}^{r} \tilde{\tau}_k^{\sss(j)} \leq \tilde{t} \} \in \N \cup \{\infty\}.   
\end{equation}
The idea behind \eqref{eq: ellDefinition} is that, conditionally on $d_\phi^-=r$, we will take the first $\ell_{k,\tilde{t}}^{\sss(r)}-1$ children of $\phi$ out of consideration, because the defining property of $\ell_{k,\tilde{t}}^{\sss(r)}$ and Lemma \ref{lem: lateStop} allow us to uniformly bound the time to produce offspring for each of the later born children of $\phi$. To follow this approach, we first need to control the asymptotics of the ratio $\frac{\ell_{k,\tilde{t}}^{\sss(r)}}{r}$.
\begin{lemma}\label{lem: exclusiveFraction}
For given $k \in S$ fix $\tilde{t}>2/\varphi_k^{\boldsymbol{m}}$. Then there is some $\tilde{p}_{k,\tilde{t}}\in (0,1)$ such that  $\mathbb{P}\left(\ell_{k,\tilde{t}}^{\sss(r)}/r >\tilde{p}_{k,\tilde{t}} \right) \stackrel{r \to \infty}{\rightarrow} 0$.  
\end{lemma}
The proof of Lemma \ref{lem: exclusiveFraction} will be given at the of of this section.
Finally, we verify the Assumption \ref{eq: LowerBoundMainAssumtion}:
\begin{lemma}[Assumption implying \eqref{eq: LowerBoundMainAssumtion}]
\label{lem: lowerbound}
Fix $k \in S$. Then, for any $\tilde{t}>2/\varphi_k^{\boldsymbol{m}}$, choose $\tilde{p}_{k,\tilde{t}} \in (0,1)$ as in Lemma \ref{lem: exclusiveFraction}. Next set $p \in (\tilde{p}_{k,\tilde{t}},1)$ and arbitrarily choose $t>0$ and $\varepsilon \in (0,p-\tilde{p})$. Finally, choose
$\alpha:=\bar{F}_{D_{k,\tilde{t}+t}}^{\leftarrow}(p-\tilde{p}-\varepsilon):=\min\{s \in \N \mid \mathbb{P}(D_{k,\tilde{t}+t} \geq s) \leq p-\tilde{p}-\varepsilon\}$ where $D_{k,\tilde{t}+t} \sim Poisson(m_k\varphi_k (\tilde{t}+t)).$  Then the assumption \eqref{eq: LowerBoundMainAssumtion} is fulfilled for any of the above choices of $p$ and $\alpha$.
In particular, for any $\beta>\frac{2(m_k+\alpha)}{(1-p)c(1-c)}$, 
\begin{equation*}
\mathbb{P}_k^{\mathcal{T}}(R_\phi^{1,\textup{dirTr}}>r) \geq (1+o(1))\mathbb{P}_k^{\mathcal{T}}(d_\phi^->\beta r) \quad \text{as }r \rightarrow \infty.  
\end{equation*}
\end{lemma}
The proof of Lemma \ref{lem: lowerbound} will be given at the of of this section.
\paragraph{Completion of the proof of Proposition \ref{pro: LowerBound}.}
Part (c) in Proposition \ref{pro: LowerBound} now follows directly from Lemma \ref{lem: lowerbound} and the assumption that $S$ is finite.
\end{proof}
\begin{proof}[Proof of the lower bound in Theorem \ref{thm: Power-law bounds}]
For any $n \in \N$, we  regard the undirected PAM $G_n$ with finite set $S$ of colors, color distribution $\mu$ and attractiveness function $\kappa$ as a result of embedding the directed PAM $\vec{G}_n$ with the same parameters into the set of undirected graphs by identifying each directed edge with an undirected one. 
First, by \eqref{eq: PageRank-DirVsUndir}, the PageRank $R_{v}(G_n)$ of any $v \in G_n$ is stochastically bounded below by $R^{1,\mathrm{dirTr}}_v(\vec{G}_n)$, the truncated directed PageRank as defined in Definition \ref{def: TruncatedPageRank}. In combination with Proposition \ref{pro: LowerBound}(c) for the tail distribution of the random variable $R^{1,\mathrm{dirTr}}_v(\vec{G}_n)$ conditioned on a class $k$, this gives, as $r \rightarrow \infty$,
\begin{equation}\label{eq: AsymptLowerBoundInDegree}
\begin{split}
\liminf_{n \rightarrow \infty} \mathbb{P}(R_{\phi_n}(G_n)>r \mid X_{\phi_n}=k) &\geq \mathbb{P}(R^{1,\mathrm{dirTr}}_\phi(\mathcal{T}_X(\tau))>r \mid X_\phi=k) \cr 
&\geq  (1+o(1)) \;\mathbb{P}_k^{\mathcal{T}}(d_{\phi}^->\beta r \mid X_\phi=k).
\end{split}
\end{equation}
Finally, to make use of the known distribution of $d_\phi^{-}+m_{X_\phi}$, which is 
\begin{equation}\label{eq: AntunesDeg}
\mathbb{P}_k^{\mathcal{T}}(d_\phi^-+m_{X_\phi}=r)=\frac{2\Gamma(m_k+\frac{2}{\varphi_k^{\boldsymbol{m}}})}{\varphi_k^{\boldsymbol{m}} \Gamma(m_k)}\frac{\Gamma(r)}{\Gamma(r+\frac{2} {\varphi_k^{\boldsymbol{m}}}+1)}    
\end{equation}
(see \cite[Theorem 4.4]{AnBaBhPi26}), we
note that for any $\gamma>\beta$ the inequality $d_v>\gamma r$ implies $d_v^->\beta r$ once $r > \frac{\gamma-\beta}{m_\text{max}}$. In this way, we obtain from \eqref{eq: AsymptLowerBoundInDegree}, as $r \rightarrow \infty$,
\begin{equation}
\liminf_{n \rightarrow \infty} \mathbb{P}(R_{\phi_n}(G_n)>r \mid X_{\phi_n}=k) \geq (1+o(1))\mathbb{P}_k^{\mathcal{T}}(d_{\phi}^-+m_{X_\phi}>\gamma r ).  
\end{equation}
Finally inserting \eqref{eq: AntunesDeg} completes the proof of the lower bound in Theorem \ref{thm: Power-law bounds}.
\end{proof}
\paragraph{Postponed proofs of Lemmas \ref{lem: PageRankLowerBound} to \ref{lem: lowerbound}}\label{par: postponed proofs}
\begin{proof}[Proof of Lemma \ref{lem: PageRankLowerBound}]
Analogously to the proof of \cite[Proposition 1.8]{HevHLi25a}, we write
\begin{align}
&\mathbb{P}_k^{\mathcal{T}}\Big(c(1-c)\sum_{j \rightarrow \phi}\frac{1}{m_k+d_v^-}>r\Big)\cr
&\geq \mathbb{P}_k^{\mathcal{T}}\Big(c(1-c)\sum_{v \rightarrow \phi}\frac{1}{m_k+d_v^-}>r,\#\{v \colon  v \rightarrow \phi \text{ and } d_v^- < \alpha\} \geq (1-p) d_\phi^-\Big)\\
&\geq \mathbb{P}_k^{\mathcal{T}}\Big(d_\phi^->\frac{m_k+\alpha}{(1-p) c(1-c)}r,\#\{v \colon v \rightarrow \phi \text{ and } d_v^- < \alpha\} \geq (1-p) d_\phi^-\Big)\nn\cr
&\geq \mathbb{P}_k^{\mathcal{T}}\Big(d_\phi^->\frac{m_k+\alpha}{(1-p) c(1-c)}r\Big)-\mathbb{P}_k^{\mathcal{T}}\Big(d_\phi^->\frac{m_k+\alpha}{(1-p) c(1-c)}r, \dphigeqalphadir  \geq p d_\phi^-\Big)\cr 
&=\mathbb{P}_k^{\mathcal{T}}(d_\phi^->\beta r )-\mathbb{P}_k^{\mathcal{T}}(d_\phi^->\beta r, \dphigeqalphadir  \geq p d_\phi^-).\nn 
\end{align}
Finally,
\begin{equation*}
\begin{split}
\frac{\mathbb{P}_k^{\mathcal{T}}\Big(d_\phi^->\beta r, \dphigeqalphadir  \geq p d_\phi^-\Big)}{\mathbb{P}_k^{\mathcal{T}}\Big(d_\phi^->\beta r \Big)}&=\sum_{j=0 }^\infty\mathbb{P}_k^{\mathcal{T}}(\dphigeqalphadir  \geq p d_\phi^- \mid d_\phi^-=j)\frac{1\{j \geq \lceil \beta r \rceil \}\mathbb{P}_k^{\mathcal{T}}(d_\phi^-=j)}{\mathbb{P}_k^{\mathcal{T}}(d_\phi^->\beta r)}    \cr 
&\leq \sup_{j \geq \lceil \beta r \rceil} \mathbb{P}_k^{\mathcal{T}}(\dphigeqalphadir  \geq p d_\phi^- \mid d_\phi^-=j) \stackrel{r \to \infty}{\rightarrow} 0,
\end{split}    
\end{equation*}
where the upper bound follows from the fact that we are summing with respect to a probability measure supported on $\N \cap [\beta r, \infty]$ and the convergence to zero then is a consequence of Assumption \eqref{eq: LowerBoundMainAssumtion}.
Thus, $\mathbb{P}_k^{\mathcal{T}}\Big(d_\phi^->\frac{m_k+\alpha}{(1-p) c(1-c)}r, \dphigeqalphadir  \geq p d_\phi^-\Big) =o(1)\mathbb{P}_k^{\mathcal{T}}\Big(d_\phi^->\frac{m_k+\alpha}{(1-p) c(1-c)}r\Big)$, which concludes the proof of Lemma \ref{lem: PageRankLowerBound}.
\end{proof}
\begin{proof}[Proof of Lemma \ref{lem: lateStop}]
We have 
\begin{equation}\label{eq: tauExcess}
\mathbb{P}_k^{\mathcal{T}}(\tau>\sum_{j=1}^r \tilde{\tau}_k^{\sss(j)}+t \mid d_\phi^-=r)=\frac{\mathbb{P}_k^{\mathcal{T}}(\sum_{j=1}^r \tilde{\tau}_k^{\sss(j)}+t \leq \tau <\sum_{j=1}^{r+1} \tilde{\tau}_k^{\sss(j)})}{\mathbb{P}_k^{\mathcal{T}}(d_\phi^-=r)} \leq \frac{\mathbb{P}_k^{\mathcal{T}}(\tilde{\tau}_{k}^{\sss(r+1)}>t)}{\mathbb{P}_k^{\mathcal{T}}(d_\phi^-=r)} \stackrel{r \to \infty}{\rightarrow} 0.  
\end{equation}
Here, the convergence follows from the fact that the numerator in \eqref{eq: tauExcess} decreases exponentially fast with increasing $r$, while the denominator decreases only polynomially fast (see \eqref{eq: LimitingDegDistribution}).
Equation \eqref{eq: tauExcess} then gives 
\begin{equation*}
\begin{split}
&\limsup_{r \to \infty}\mathbb{P}_k^{\mathcal{T}}\left( \dphigeqalphadir 
\geq pd^-_\phi \mid d_\phi^- =r\right) =\limsup_{r \to \infty}\mathbb{P}_k^{\mathcal{T}}\left( \dphigeqalphadir 
\geq pd^-_\phi, \ \tau \leq \sum_{j=1}^r \tilde{\tau}_k^{\sss(j)}+t \mid d_\phi^- =r\right)\cr
&\leq \limsup_{r \to \infty}\mathbb{P}_k^{\mathcal{T}}\left( \dphigeqalphadir 
\geq pd^-_\phi \mid d_\phi^- =r, \ \tau = \sum_{j=1}^r \tilde{\tau}_k^{\sss(j)}+t\right),
\end{split}
\end{equation*}
which finishes the proof of Lemma \ref{lem: lateStop}.
\end{proof}
\begin{proof}[Proof of Lemma \ref{lem: exclusiveFraction}]
The proof is based on calculating the first two moments of the random sum in \eqref{eq: ellDefinition} and then applying Chebyshev's inequality. We start by analyzing the deterministic versions of \eqref{eq: ellDefinition} given by
\[\bar{\ell}_{k,\tilde{t}/2}^{\sss(r)}:=\inf\{l \in \{1,2,\ldots,r\} \mid \sum_{j=l}^{r} \mathbb{E}[\tilde{\tau}_k^{\sss(j)}] \leq \tilde{t}/2 \} \in \N \cup \{\infty\}.\]
First, \begin{equation}\label{eq: ExpToHarmonic}
\mathbb{E}\Big[\sum_{j=l}^{r} \tilde{\tau}_k^{\sss(j)}\Big]=\frac{1}{\varphi_k^{\boldsymbol{m}}}\sum_{j=l}^{r}\frac{1}{j+m_k}=\frac{1}{\varphi_k^{\boldsymbol{m}}}\sum_{j=l+m_k}^{r+m_k}\frac{1}{j}.
\end{equation}
Using $\sum_{j=1}^n \frac{1}{j}=\log(n)+\gamma+\frac{1}{2n}-L_n,$
where $\gamma$ is the Euler-Mascheroni constant, and $L_n \in [0, \frac{1}{8n^2}]$ and considering the boundary cases for $L_n$ gives
\begin{equation}\label{eq: HarmonicUpperBound}
\sum_{j=l+m_k}^{r+m_k}\frac{1}{j} \leq \log(r+m_k)+
\frac{1}{2(r+m_k)}-\left(\log(l+m_k-1)+
\frac{1}{2(l+m_k-1)}-\frac{1}{8(l+m_k-1)^2} \right),
\end{equation}
so from \eqref{eq: ExpToHarmonic} we obtain that $\mathbb{E}[\sum_{j=l}^{r} \tilde{\tau}_k^{\sss(j)}] \leq \tilde{t}/2$ is guaranteed whenever the r.h.s. of \eqref{eq: HarmonicUpperBound} is smaller or equal to $\varphi_k^{\boldsymbol{m}}\tilde{t}/2$, which is equivalent to $l$ being large enough to satisfy
\begin{equation}\label{eq: tildeLUB}
\begin{split}
&\log(l+m_k-1)+\frac{1}{2(l+m_k-1)}-\frac{1}{8(l+m_k-1)^2} \geq \log(r+m_k)+\frac{1}{2(r+m_k)}-\varphi_{m_k}^{\boldsymbol{m}}\tilde{t}/2.
\end{split}
\end{equation}
In other words, the infimum of all $l \in \{1,2,\ldots,r\}$ that satisfy \eqref{eq: tildeLUB} provides an upper bound on $\bar{\ell}_{k,\tilde{t}/2}^{\sss(r)}.$
Similarly,
\begin{equation}\label{eq: HarmonicLowerBound}\sum_{j=l+m_k}^{r+m_k}\frac{1}{j} \geq \log(r+m_k)+
 \frac{1}{2(r+m_k)}-\frac{1}{8(r+m_k)^2}-\left(\log(l+m_k-1)+
\frac{1}{2(l+m_k-1)} \right),
\end{equation}
so from \eqref{eq: ExpToHarmonic} we obtain that if $\mathbb{E}[\sum_{j=l}^{r} \tilde{\tau}_k^{\sss(j)}] \leq \tilde{t}/2$ then the r.h.s.\ of \eqref{eq: HarmonicLowerBound} is necessarily smaller than or equal to $\varphi_k^{\boldsymbol{m}}\tilde{t}/2$, which is equivalent to $l$ being large enough to satisfy
\begin{equation}\label{eq: TildeLLB}
\begin{split}
&\log(l+m_k-1)+\frac{1}{2(l+m_k-1)} \geq \log(r+m_k)+\frac{1}{2(r+m_k)}-\frac{1}{8(r+m_k)^2}-\varphi_{m_k}^{\boldsymbol{m}}\tilde{t}/2.    
\end{split}    
\end{equation}
In other words, the infimum of all $l \in \{1,2,\ldots,r\}$ that satisfy \eqref{eq: TildeLLB} provides a lower bound on $\bar{\ell}_{k,\tilde{t}/2}^{\sss(r)}.$
Note that any $l$ that is large enough to fulfill \eqref{eq: tildeLUB} will also fulfill \eqref{eq: TildeLLB}.
Further, applying the inequalities $\log(x)+\frac{1}{2x}-\frac{1}{8x^2} \geq \log(x)$ and $\log(x)+\frac{1}{2x}\leq \log(x+1)$ with $x=l+m_k-1>1$ to \eqref{eq: tildeLUB} and \eqref{eq: TildeLLB} gives that
\begin{equation}\label{eq: ellUpperBound}
\bar{\ell}_{k,\tilde{t}/2}^{\sss(r)} \leq (r+m_k)e^{\frac{1}{2(r+m_k)}-\varphi_{m_k}^{\boldsymbol{m}}\tilde{t}/2}+1-m_k \quad \text{ and } 
\end{equation}
\begin{equation}\label{eq: ellLowerBound}
\bar{\ell}_{k,\tilde{t}/2}^{\sss(r)} \geq (r+m_k)e^{\frac{1}{2(r+m_k)}-\varphi_{m_k}^{\boldsymbol{m}}\tilde{t}/2}-m_k.
\end{equation}
From the assumption $\tilde{t}>\frac{2}{\varphi_{m_k}}$ and \eqref{eq: ellUpperBound} we thus obtain that $
\limsup_{r \to \infty}\bar{\ell}_{k,\tilde{t}/2}^{\sss(r)}/r<1.$
Next, we compute the variance as
\begin{equation}\label{eq: ellVariance}
\begin{split}
\mathbb{V}\Big[\sum_{j=\bar{\ell}_{k,\tilde{t}/2}^{\sss(r)}}^{r} \tilde{\tau}_k^{\sss(j)}\Big]&=\frac{1}{\varphi_k^{\boldsymbol{m}}}\sum_{j=\ell_{k,\tilde{t}}^{\sss(r)}+m_k}^{r+m_k}\frac{1}{j^2} \leq \frac{1}{\varphi_k^{\boldsymbol{m}}} \int_{\bar{\ell}_{k,\tilde{t}/2}^{\sss(r)}+m_k-1}^{r+m_k}\frac{1}{x^2} \ \text{d}x\cr 
&=\frac{1}{\varphi_k^{\boldsymbol{m}}}\left(\frac{1}{\bar{\ell}_{k,\tilde{t}/2}^{\sss(r)}+m_k-1}-\frac{1}{r+m_k}\right).  
\end{split}
\end{equation}
By \eqref{eq: ellLowerBound}, we obtain $\bar{\ell}_{k,\tilde{t}/2}^{\sss(r)}+m_k-1 \geq re^{-\varphi_{m_k}\tilde{t}/2}-1$ for all $r \in \N$. Thus, from \eqref{eq: ellVariance}, we obtain
$\mathbb{V}\Big[\sum_{j=\bar{\ell}_{k,\tilde{t}/2}^{\sss(r)}}^{r} \tilde{\tau}_k^{\sss(j)}\Big] =O(\frac{1}{r}).$
Then, Chebyshev's inequality gives
\begin{equation}\label{eq: ChebyshevForEll}
\mathbb{P}\Bigg(\sum_{j=\bar{\ell}_{k,\tilde{t}/2}^{\sss(r)}}^{r} \tilde{\tau}_k^{\sss(j)}>\tilde{t} \Bigg) \leq \mathbb{P}\Bigg(\Big\vert \sum_{j=\bar{\ell}_{k,\tilde{t}/2}^{\sss(r)}}^{r} \tilde{\tau}_k^{\sss(j)} -\mathbb{E}\Big[\sum_{j=\bar{\ell}_{k,\tilde{t}/2}^{\sss(r)}}^{r} \tilde{\tau}_k^{\sss(j)}\Big]\Big\vert > \tilde{t}/2\Bigg) \leq (\tilde{t}/2)^{-2}\mathbb{V}\Bigg[\sum_{j=\bar{\ell}_{k,\tilde{t}/2}^{\sss(r)}}^{r} \tilde{\tau}_k^{\sss(j)}\Bigg].     
\end{equation}
From \eqref{eq: ChebyshevForEll}, we conclude
\begin{equation}\label{eq: EllVsBarEll}
\mathbb{P}(\ell_{k,\tilde{t}}^{\sss(r)}>\bar{\ell}_{k,\tilde{t}/2}^{\sss(r)}) \stackrel{r \to \infty}{\rightarrow} 0.\end{equation}
Finally, recall that $\limsup_{r \to \infty}\bar{\ell}_{k,\tilde{t}/2}^{\sss(r)}/r<1$ and choose any $\tilde{p}_{k,\tilde{t}} \in (\limsup_{r \to \infty}\bar{\ell}_{k,\tilde{t}/2}^{\sss(r)}/r,1)$. Then $\bar{\ell}_{k,\tilde{t}/2}^{\sss(r)}/r<\tilde{p}_{k,\tilde{t}}$ for all $r$ sufficiently large. In combination with \eqref{eq: EllVsBarEll}, this concludes the proof of Lemma \ref{lem: exclusiveFraction}.
\end{proof}
\begin{proof}[Proof of Lemma \ref{lem: lowerbound}]
Choose $\tilde{t},t, \tilde{p}_{k,\tilde{t}},p,\varepsilon,\alpha$ as in the statement of Lemma \ref{lem: lowerbound}. 
Further, conditionally on $d_\phi^-=r$, enumerate the $r$ children of $\phi$ by $j=1,2,\ldots,r$ in the order of their birth with the oldest child having index $1$. By Lemma \ref{lem: lateStop},
\begin{equation}\label{eq: RecaptauExcess}
\limsup_{r \to \infty}\mathbb{P}_k^{\mathcal{T}}\left( \dphigeqalphadir 
\geq pd^-_\phi \mid d_\phi^- =r\right) \leq \limsup_{r \to \infty}\mathbb{P}_k^{\mathcal{T}}\left( \dphigeqalphadir 
\geq pd^-_\phi \mid d_\phi^- =r, \ \tau = \sum_{j=1}^r \tilde{\tau}_k^{\sss(j)}+t\right)  
\end{equation}
Recall the random number $\ell_{k,\tilde{t}}^{\sss(r)}$ from \eqref{eq: ellDefinition} 
to bound \eqref{eq: RecaptauExcess} as
\begin{equation}\label{eq: IndicatorsAndTauExcess}
\begin{split}
&\limsup_{r \to \infty}\mathbb{P}_k^{\mathcal{T}}\left( \dphigeqalphadir 
\geq pd^-_\phi \mid d_\phi^- =r, \ \tau = \sum_{j=1}^r \tilde{\tau}_k^{\sss(j)}+t\right) \cr 
&=\limsup_{r \to \infty}\mathbb{P}_k^{\mathcal{T}}\left( \sum_{j=1}^r1\{d_j^- \geq \alpha\}
\geq pd^-_\phi \mid d_\phi^- =r, \ \tau = \sum_{j=1}^r \tilde{\tau}_k^{\sss(j)}+t\right)\cr 
&\leq \limsup_{r \to \infty}\mathbb{P}_k^{\mathcal{T}}\left( \sum_{j=\ell_{k,\tilde{t}}^{\sss(r)}}^r1\{d_j^- \geq \alpha\}
\geq pd^-_\phi-  \ell_{k,\tilde{t}}^{\sss(r)}\mid d_\phi^- =r, \ \tau = \sum_{j=1}^r \tilde{\tau}_k^{\sss(j)}+t\right).
\end{split}
\end{equation}
By definition of the tree $\mathcal{T}_k(\tau)$, for every $r \in \N$, the family $(d_j^-)_{j=1}^r$ is independent given the entire family $(\tilde{\tau}_k^{\sss(j)})_{j=1}^r$ of inter-birth waiting times, where $\tilde{\tau}_1$ is the absolute time of birth of the oldest child, and the time $\tau$ at which the birth process is stopped. By definition of $\ell_{k,\tilde{t}}^{\sss(r)}$, under the condition $\tau = \sum_{j=1}^r \tilde{\tau}_k^{\sss(j)}+t$, each of the vertices $\ell_{k,\tilde{t}}^{\sss(r)},\ell_{k,\tilde{t}}^{\sss(r)}+1, \ldots,r$ has at most time $\tilde{t}+t$ to produce offspring and, by the definition of $\alpha$, each of the indicators in \eqref{eq: IndicatorsAndTauExcess} thus takes the value one with a probability smaller than or equal to $p-\tilde{p}-\varepsilon$. Hence, the conditional probability obtained from further conditioning \eqref{eq: IndicatorsAndTauExcess} on $(\tilde{\tau}_k^{\sss(j)})_{j=1}^r$ can be bounded uniformly in the conditioning on $\tau$ and $(\tilde{\tau}_k^{\sss(j)})_{j=1}^r$ as
\begin{equation}\label{eq: stochastic bound}
\begin{split}
&\mathbb{P}_k^{\mathcal{T}}\left( \sum_{j=\ell_{k,\tilde{t}}^{\sss(r)}}^r1\{d_v^- \geq \alpha\}
\geq pd^-_\phi-  \ell_{k,\tilde{t}}^{\sss(r)}\mid d_\phi^- =r, \ \tau = \sum_{j=1}^r \tilde{\tau}_k^{\sss(j)}+t, (\tilde{\tau}_k^{\sss(j)})_{j=1,2,\ldots,r}\right)\cr
&\leq \mathbb{P}_k^{\mathcal{T}}\left(\sum_{j=\ell_{k,\tilde{t}}^{\sss(r)}}^{r}Z_j \geq pd^-_\phi-  \ell_{k,\tilde{t}}^{\sss(r)}\mid d_\phi^- =r\right), 
\end{split}
\end{equation}
where $(Z_j)_{j \in \N}$ is an i.i.d. family of $\mathrm{Bernoulli}(p-\tilde{p}_{k,\tilde{t}}-\varepsilon)$-random variables that is independent of everything else.
Note that the upper bound in \eqref{eq: stochastic bound} also is an upper bound on the respective (regarding the same $r$) probability under the limsup in \eqref{eq: IndicatorsAndTauExcess}, which formally follows from integrating out \eqref{eq: stochastic bound} with respect to the conditional distribution of the random vector $(\tilde{\tau}_j)_{j=1}^r$ given the event $\tau = \sum_{j=1}^r \tilde{\tau}_k^{\sss(j)}+t$. 
It remains to show that \eqref{eq: stochastic bound} vanishes as $r \to \infty$. First, we bound \eqref{eq: stochastic bound} as
\begin{equation}
\mathbb{P}_k^{\mathcal{T}}\left(\sum_{j=\ell_{k,\tilde{t}}^{\sss(r)}}^{r}Z_j \geq pd^-_\phi-  \ell_{k,\tilde{t}}^{\sss(r)}\mid d_\phi^- =r\right) \leq \mathbb{P}_k^{\mathcal{T}}\left(\frac{1}{r}\sum_{j=1}^{r}Z_j \geq p-  \frac{\ell_{k,\tilde{t}}^{\sss(r)}}{r}\mid d_\phi^- =r\right).   
\end{equation}
Next, we decompose
\begin{equation}\label{eq: ellDecomposition}
\begin{split}
\mathbb{P}_k^{\mathcal{T}}\left(\frac{1}{r}\sum_{j=1}^{r}Z_j \geq p-  \frac{\ell_{k,\tilde{t}}^{\sss(r)}}{r}\mid d_\phi^- =r\right)
&=\mathbb{P}_k^{\mathcal{T}}\left(\frac{1}{r}\sum_{j=1}^{r}Z_j \geq p-  \frac{\ell_{k,\tilde{t}}^{\sss(r)}}{r}, \ \ell_{k,\tilde{t}}^{\sss(r)} \leq \tilde{p}_{k,\tilde{t}}+\frac{\varepsilon}{2} \mid d_\phi^- =r\right)\cr & +\mathbb{P}_k^{\mathcal{T}}\left(\frac{1}{r}\sum_{j=1}^{r}Z_j \geq p-  \frac{\ell_{k,\tilde{t}}^{\sss(r)}}{r}, \ \ell_{k,\tilde{t}}^{\sss(r)} > \tilde{p}_{k,\tilde{t}}+\frac{\varepsilon}{2} \mid d_\phi^- =r\right).    
\end{split}
\end{equation}
By the choice of $\tilde{p}_{k,\tilde{t}}$, the second term in \eqref{eq: ellDecomposition} vanishes as $r \to \infty$. For the first term in \eqref{eq: ellDecomposition}, we note that the event $\ell_{k,\tilde{t}}^{\sss(r)} \leq \tilde{p}_{k,\tilde{t}}+\frac{\varepsilon}{2}$ implies $p-\ell_{k,\tilde{t}}^{\sss(r)}/r \geq p-\tilde{p}_{k,\tilde{t}}-\frac{\varepsilon}{2}>\mathbb{E}[Z_1].$ Hence, by the law of large numbers, the first term in \eqref{eq: ellDecomposition} also vanishes as $r \to \infty$.
This concludes the proof of Lemma \ref{lem: lowerbound}.
\end{proof}
\subsection{Proof of Proposition \ref{pro: MinorityFraction}}
\begin{proof}[Proof of Proposition \ref{pro: MinorityFraction}]
 We first decompose
\begin{equation}\label{eq: MinoritityFractionDecomposition}
\mathbb{E}\Big[\frac{d_{\phi_n}^\mathrm{mino}}{d_{\phi_n}} \mid G_n\Big]=\sum_{k=1}^2\left(\mathbb{E}\Big[\frac{d_{\phi_n}^{-,\mathrm{mino},(n)}}{d_{\phi_n}}\boldsymbol{1}\{X_{\phi_n}=k\} \mid G_n\Big]+\mathbb{E}\Big[\frac{d_{\phi_n}^{+,\mathrm{mino},(n)}}{d_{\phi_n}}\boldsymbol{1}\{X_{\phi_n}=k\} \mid G_n\Big]\right),    
\end{equation}
where, for any $n \in \N$ and vertex $w \in G_n$, \begin{equation}d_w^{-,\mathrm{mino},(n)}:=\#\{v \in V(G_n) \mid  X_v=1,\ v \to w \}
\end{equation}is the number of in-edges that $w$ receives from minority vertices in $G_n$,
and 
\begin{equation}
d_w^{+,\mathrm{mino},(n)}:=\#\{v \in V(G_n) \mid X_v=1, \ w \to v \}
\end{equation}
is the number of minority vertices in $G_n$ that receive one of the $m_{X_w}$ out-edges of $w$.
In the following paragraphs, we will discuss the asymptotic behavior of each of the two conditional expectations in  \eqref{eq: MinoritityFractionDecomposition} separately.
\paragraph{Controlling the in-edges from minority vertices.}
For $k=1,2$, from the known directed local limit in Theorem \ref{thm: locallimitdirectedmtypepam}, and since $\frac{d_{\phi_n}^{-,\mathrm{mino},(n)}}{d_{\phi_n}}\boldsymbol{1}\{X_{\phi_n}=k\}$ is measurable with respect to the incoming neighborhood of finite radius one of $\phi_n$ and its color, we obtain
\begin{equation}
\label{in-mino-limit}
\mathbb{E}\Big[\frac{d_{\phi_n}^{-,\mathrm{mino},(n)}}{d_{\phi_n}}\boldsymbol{1}\{X_{\phi_n}=k\} \mid G_n\Big] \stackrel{n \rightarrow \infty}{\rightarrow} \mu_k\mathbb{E}^{\mathcal{T}}_k\Big[\frac{d_{\phi}^{-,\mathrm{mino}}}{d_{\phi}^-+m_k}\Big] \qquad \text{almost surely},
\end{equation}
where 
\[d_{\phi}^{-,\mathrm{mino}}:=\#\{v \in \mathcal{T}_X(\tau) \mid  X_v=1, \ v \to \phi\} \subseteq \mathcal{T}_X(\tau),\] 
with $\mathcal{T}_X(\tau)$ denoting the rooted branching process tree defined in Theorem \ref{thm: locallimitdirectedmtypepam}.
Furthermore, 
\begin{equation}
\mathbb{E}^\mathcal{T}_k[~\cdot~]:=\mathbb{E}^{\mathcal{T}_X(\tau)}[~\cdot \mid X_\phi=k] 
\end{equation} 
denotes the expectation with respect to the distribution of $\mathcal{T}_X(\tau)$ conditioned on $X_\phi=k$, i.e., the root $\phi$ is conditioned to be a minority vertex for $k=1$, and a majority vertex for $k=2$.
Recall that $\mathcal{T}_X(\tau)$ is a multi-type branching process stopped at $\tau \sim \mathrm{Exp}(2)$ and with a non-constant growth rate. In particular, conditionally on $X_\phi=k$, the root $\phi$ produces offspring independently at times following the law of a Markovian pure birth process $\xi_{k}(\cdot)$ with birth rate of color-$l$-offspring at time $t$ given by $(\xi_{k}(t)+m_{k})\varphi_{k,l}$. 
The following lemma computes the right-hand side of \eqref{in-mino-limit}:
\begin{lemma}\label{lem: binomial}
Conditionally on $X_\phi=k$ and $d_\phi^-=d$, where $d \in \N$, $d_\phi^{-,\mathrm{mino}}$ is binomially distributed with parameters $d$ and $\varphi^{m}_{k,1}/(\varphi^{m}_{k,1}+\varphi^{m}_{k,2})$. 
In particular,
\[\mathbb{E}^{\mathcal{T}}_k\Big[\frac{d_{\phi}^{-,\mathrm{mino}}}{d_{\phi}^-+m_k}\Big]=\frac{\varphi^{m}_{k,1}}{\varphi^{m}_{k,1}+\varphi^{m}_{k,2}}\Big(1-m_k\mathbb{E}_k^\mathcal{T}\Big[\frac{1}{d_\phi^-+m_k}\Big]\Big).\]
\end{lemma}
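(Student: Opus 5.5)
The plan is to read off the birth mechanism of the root in $\mathcal{T}_k(\tau)$ and use a thinning (competing exponential clocks) argument. Conditionally on $X_\phi=k$, the root reproduces as a Markovian pure birth process. When it currently has $\xi_k(t)=j$ children, color-$l$ offspring arrive at rate $(j+m_k)\varphi^{\boldsymbol{m}}_{k,l}$, for $l\in\{1,2\}$.

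Equivalently, I describe this process in two layers. First, births of any color occur at total rate $(j+m_k)\varphi^{\boldsymbol{m}}_k$. Second, each birth is independently assigned color $1$ with probability $\varphi^{\boldsymbol{m}}_{k,1}/\varphi^{\boldsymbol{m}}_k$. This probability does not depend on $j$ or on the past, so the color marks are i.i.d. and independent of the birth times.

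In particular, the colors of the root's children are independent of the counting process $(\xi_k(t))_{t\ge0}$. The stopping time $\tau\sim\mathrm{Exp}(2)$ is independent of the whole process. Hence $d_\phi^-=\xi_k(\tau)$ is a function of $\tau$ and the uncolored birth process alone, and so it is independent of the color sequence $(C_i)_{i\ge1}$.

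I will check this rigorously by comparing finite-dimensional distributions of the two constructions, using the strong Markov property at successive birth epochs. At each epoch, the next waiting time is $\mathrm{Exp}((j+m_k)\varphi^{\boldsymbol{m}}_k)$. Independently of it, the color is $l$ with probability $\varphi^{\boldsymbol{m}}_{k,l}/\varphi^{\boldsymbol{m}}_k$, by the standard competing-exponentials fact.

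Given $d_\phi^-=d$, we have $d_\phi^{-,\mathrm{mino}}=\sum_{i=1}^d \mathbf{1}\{C_i=1\}$. This sum is $\mathrm{Bin}(d,\varphi^{\boldsymbol{m}}_{k,1}/(\varphi^{\boldsymbol{m}}_{k,1}+\varphi^{\boldsymbol{m}}_{k,2}))$. The only subtle point is that conditioning on $d_\phi^-=d$ could, in principle, bias the colors. The independence just established rules this out.

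For the expectation, I condition on $d_\phi^-$ and use the tower property. Writing $\theta_k:=\varphi^{\boldsymbol{m}}_{k,1}/\varphi^{\boldsymbol{m}}_k$, this gives
\[
\mathbb{E}^{\mathcal{T}}_k\Big[\frac{d_\phi^{-,\mathrm{mino}}}{d_\phi^-+m_k}\Big]=\theta_k\,\mathbb{E}^{\mathcal{T}}_k\Big[\frac{d_\phi^-}{d_\phi^-+m_k}\Big]=\theta_k\Big(1-m_k\mathbb{E}^{\mathcal{T}}_k\Big[\frac{1}{d_\phi^-+m_k}\Big]\Big).
\]
The case $d=0$ contributes zero on both sides, so it causes no problem. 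All quantities are bounded by $1$, so no integrability issues arise.

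The main obstacle is the decoupling step. I need to justify carefully that the color-dependent rates $(\xi_k(t)+m_k)\varphi^{\boldsymbol{m}}_{k,l}$ share the common factor $\xi_k(t)+m_k$. Here $\xi_k$ counts children of all colors, as in the definition of $\mathrm{BP}_k$. This common factor is exactly what makes the color ratio constant and independent of $\tau$.
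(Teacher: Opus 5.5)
Your proof is correct. It rests on the same fact as the paper's: the two color clocks of the root share the factor $\xi_k(t)+m_k$. Hence every new child has color $1$ with probability $\varphi^{\boldsymbol{m}}_{k,1}/(\varphi^{\boldsymbol{m}}_{k,1}+\varphi^{\boldsymbol{m}}_{k,2})$, whatever the current state.

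Where you differ is in how the argument is organized.
\begin{itemize}
\item The paper argues by induction on $d$. Its strengthened hypothesis is that, given $d_\phi^-=d+s$, the number of minority children among the first $d$ children is binomial. In the induction step it integrates the competing-exponentials probability over the birth time $\tau_d$ of the $d$-th child.
\item You make one global decomposition: an uncolored pure birth process with rates $(j+m_k)\varphi^{\boldsymbol{m}}_k$, plus an i.i.d.\ color sequence $(C_i)_{i\ge1}$ independent of it and of $\tau$. Then $d_\phi^-=\xi_k(\tau)$ is independent of the colors, and the binomial law follows at once.
\end{itemize}

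Your route makes explicit two points the paper's induction leaves implicit.
\begin{itemize}
\item In its displayed computation, the paper drops the conditioning on $d_\phi^-=d+1+s$ when it integrates against the unconditional law of $\tau_d$. That event involves $\tau$ and all the birth times.
\item The paper only computes the one-step probability that child $d+1$ is a minority vertex. Going from $\mathrm{Bin}(d,\cdot)$ to $\mathrm{Bin}(d+1,\cdot)$ also needs that color to be independent of the first $d$ colors given $d_\phi^-$.
\end{itemize}
Both facts are exactly the independence you establish via the strong Markov property at the successive birth epochs.

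The second part of the lemma, the tower property over $d_\phi^-$ using the binomial mean, is the same in both.
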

\begin{proof}[Proof of Lemma \ref{lem: binomial}]
We will prove a slightly stronger statement by induction on $d$, the induction statement being that, for every $s \in \N_0$ and $k \in \{1,2\}$  conditionally on $X_\phi=k$ and $d_\phi^-=d+s$, the number of minority children among the first $d$ children of $\phi$ is binomially distributed with parameters $d$ and $\varphi^{m}_{k,1}/(\varphi^{m}_{k,1}+\varphi^{m}_{k,2})$.

To get started, let $t \geq 0$ be given, and let $\tau_{t,\mathrm{mino}}$ denote the waiting time after $t$ until the birth of the first minority-child of $\phi$ born after time $t$. Similarly, let $\tau_{t,\mathrm{majo}}$ denote the waiting time after $t$ until the birth of the first majority-child of $\phi$ born after time $t$.
Then 
\begin{equation}
\tau_{t,\mathrm{mino}} \sim \mathrm{Exp}\left((m_k+\xi_k(t))\varphi^{m}_{k,1}\right), \qquad 
\tau_{t,\mathrm{majo}} \sim \mathrm{Exp}\left((m_k+\xi_k(t))\varphi^{m}_{k,2}\right).
\end{equation}
In particular,
\begin{equation}
\mathbb{P}_k^{\mathcal{T}}(\tau_{t,\mathrm{mino}} < \tau_{t,\mathrm{majo}})=\frac{(m_k+\xi_k(t))\varphi^{m}_{k,1}}{(m_k+\xi_k(t))\varphi^{m}_{k,1}+(m_k+\xi_k(t))\varphi^{m}_{k,2}}=\frac{\varphi^{m}_{k,1}}{\varphi^{m}_{k,1}+\varphi^{m}_{k,2}} 
\end{equation}
independently of $t$.
In particular, for $t=0$, this proves the initialization of the  induction hypothesis for $d=1$. 

Next, assume that the induction hypothesis holds for some $d$. Consider the event $d^-_\phi =d+1+s$ for any $s \in \N_0$, and let $\tau_{d}$ denote the time of birth of child $d$. Further, let $d_{\phi,d}^{-,\mathrm{mino}}$ denote the number of minority children among the first (at most) $d$ children of $\phi$. Then,
\begin{equation}
\begin{split}
&\mathbb{P}^{\mathcal{T}}_k(d_{\phi,d+1}^{-,\mathrm{mino}}=d_{\phi,d}^{-,\mathrm{mino}}+1 \mid d_\phi^-=d+1+s)\cr&=\int_0^\infty \mathbb{P}^{\mathcal{T}}_k(d_{\phi,d+1}^{-,\mathrm{mino}}=d_{\phi,d}^{-,\mathrm{mino}}+1 \mid \tau_{d}=t)\mathbb{P}_k^{\tau_{d}}(dt)\cr
&=\int_0^\infty \mathbb{P}^{\mathcal{T}}_k(\tau_{t,\mathrm{mino}} < \tau_{t,\mathrm{majo}})\mathbb{P}_k^{\tau_{d}}(dt)=\frac{\varphi^{m}_{k,1}}{\varphi^{m}_{k,1}+\varphi^{m}_{k,2}}.
\end{split}    
\end{equation}
This proves the induction step. With the choice $s=0$ this thus concludes the proof of the first statement of Lemma \ref{lem: binomial}. 

For the second statement, we condition on the value of $d_\phi^-+m_k$ to obtain
\begin{equation}\label{eq: MinoInFractionTotalExp}
\mathbb{E}^{\mathcal{T}}_k\Big[\frac{d_{\phi}^{-,\mathrm{mino}}}{d_{\phi}^-+m_k}\Big]=\sum_{r=m_k}^\infty\mathbb{E}_k^{\mathcal{T}}\Big[\frac{d_{\phi}^{-,\mathrm{mino}}}{r} \mid d_\phi^{-}+m_k=r\Big]\mathbb{P}_k^\mathcal{T}(d_\phi^-+m_k=r).
\end{equation}
By the first statement of the lemma, conditionally on $d_\phi^-+m_k=r$ and $X_\phi=k$, the random variable $d_{\phi}^{-,\mathrm{mino}}$ is binomially distributed with parameters $r-m_k$ and $\varphi^{m}_{k,1}/(\varphi^{m}_{k,1}+\varphi^{m}_{k,2})$. Thus,  
\begin{equation}\label{eq: r cancellation}
\mathbb{E}_k^{\mathcal{T}}\Big[\frac{d_{\phi}^{-,\mathrm{mino}}}{r} \mid d_\phi^{-}+m_k=r\Big]=\frac{\varphi^{m}_{k,1}}{\varphi^{m}_{k,1}+\varphi^{m}_{k,2}}\left(1-\frac{m_k}{r}\right).  
\end{equation}
Inserting \eqref{eq: r cancellation} into \eqref{eq: MinoInFractionTotalExp} finishes the proof of Lemma \ref{lem: binomial}.
\end{proof}
\paragraph{The degrees and minority-out degrees are asymptotically independent.}
The behavior of $d_{\phi_n}^{+,\mathrm{mino}}/d_{\phi_n}$ is not described  by the directed local limit in Theorem \ref{thm: locallimitdirectedmtypepam}, which only provides information on finite incoming neighborhoods of $\phi_n$ and its color. In particular, controlling the correlation between $d_{\phi_n}^{+,\mathrm{mino}}$ and $d_{\phi_n}$ needs an additional argument. In Lemma \ref{lem: asymptotic uncorrelated} below we will describe the asymptotic joint law of $(d_{\phi_n},d_{\phi_n}^{+,\mathrm{mino},(n)},X_{\phi_n})$. The proof of Lemma \ref{lem: asymptotic uncorrelated} is an extension of the proof of \cite[Theorem 2.2]{Jo13}, which describes the distribution of $d_{\phi_n}$ as $n \to \infty$, and also serves as the basis of the proof of Theorem \ref{thm: locallimitdirectedmtypepam}.
It is based on the following stochastic recursion:
\begin{lemma}[Lemma 3.3 in \cite{Jo13}]\label{lem: JordansLemma}
For $n \in \N$, let $A_n$ and $B_n$ be random variables taking non-negative values, $\xi_n,R_n$ random variables taking real values, and $a$ and $b$ positive constants such that
\[B_{n+1}-B_n=\frac{1}{n}(A_n-bB_n+\xi_n)+R_{n+1},\]
where 
\begin{enumerate}
    \item $A_n \stackrel{n \to \infty}{\rightarrow} a$ almost surely;
    \item $\sum_{n=1}^\infty R_n<\infty;
    $
    \item $\mathbb{E}[\xi_n]=0$ and $(\xi_n)_{n \in \N}$ is bounded. 
\end{enumerate}
Then $B_n \stackrel{n \to \infty}{\rightarrow} a/b$ almost surely.
\end{lemma}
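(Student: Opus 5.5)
The plan is the standard stochastic-approximation (Robbins--Monro) argument. I first handle the deterministic skeleton: if $\sum_n R_n$ converges and $\xi_n\equiv 0$, the recursion is a discretised version of the ODE $\dot B=a-bB$, whose unique globally attracting equilibrium is $a/b$. The noise $\xi_n/n$ is then dealt with by a martingale argument. I read condition 3 as $\mathbb{E}[\xi_n\mid\mathcal{F}_n]=0$, where $\mathcal{F}_n$ is the natural filtration making $A_n,B_n$ measurable. This is the form in which the lemma is used for the PAM, with the $\xi_n$ being centred increments of the urn.

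\textbf{Step 1 (noise summable).} Set $M_n:=\sum_{j=1}^{n}\xi_j/j$. Since $|\xi_j|\le C$, the martingale $M_n$ satisfies $\mathbb{E}[M_n^2]\le C^2\sum_j j^{-2}<\infty$. It is therefore $L^2$-bounded and converges almost surely. Together with condition 2, the total perturbation $\sum_{j}(\xi_j/j+R_{j+1})$ converges almost surely. Consequently the tails $\rho_n:=\sup_{m\ge n}\bigl|\sum_{j=n}^{m}(\xi_j/j+R_{j+1})\bigr|\to0$.

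\textbf{Step 2 (drift analysis).} I fix $\delta>0$ and work on the almost-sure event where $A_n\to a$ and $\rho_n\to0$. I choose $N$ such that $|A_n-a|<\delta$ and $\rho_n<\delta$ for all $n\ge N$. First I show that $B_n$ stays bounded: whenever $B_n>(a+\delta)/b$ the drift $(A_n-bB_n)/n$ is negative, so $B$ can only grow through the perturbations, whose cumulative size is at most $\delta$. Next, if $B_j\ge a/b+\varepsilon$ for all $j$ in a block $[n,m]$, then
\[
B_m-B_n\le -\frac{b\varepsilon-\delta}{1}\sum_{j=n}^{m-1}\frac1j+2\delta .
\]
Taking $\delta<b\varepsilon/2$ and using that $\sum 1/j=\infty$, this shows the block cannot be infinite, so $B_n$ enters $(-\infty,a/b+\varepsilon)$ infinitely often. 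Once there, an upcrossing back above $a/b+2\varepsilon$ would need either a positive drift, which is impossible above $a/b+\varepsilon$, or perturbations exceeding $\varepsilon$. The one-step increments also vanish, since $|B_{n+1}-B_n|\le (|A_n|+b|B_n|+C)/n+|R_{n+1}|\to0$. Hence eventually $B_n<a/b+2\varepsilon$, and the same argument gives $B_n>a/b-2\varepsilon$ eventually. Since $\varepsilon$ is arbitrary, $B_n\to a/b$ almost surely.

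\textbf{Main obstacle.} The delicate point is the upcrossing argument in Step 2. It must combine three facts: the one-step increments vanish, the divergence of $\sum 1/j$ forces re-entry into the band, and the tail bound $\rho_n$ controls how far $B$ can drift against the drift. I would organise this cleanly via the comparison $B_n-\sum_{j<n}(\xi_j/j+R_{j+1})$, which satisfies a deterministic perturbed recursion. Non-negativity of $B_n$ is not needed beyond guaranteeing boundedness from below, which follows symmetrically.
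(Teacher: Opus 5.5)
Your proof is correct. The paper gives no argument of its own here; it imports the statement from Jordan's Lemma 3.3, where it is a standard stochastic-approximation fact. Your route is the usual self-contained proof of such a statement: almost sure convergence of the $L^2$-bounded martingale $\sum_{j\le n}\xi_j/j$, followed by a drift and upcrossing analysis of the perturbed Euler scheme for $\dot B=a-bB$.

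What your write-up adds beyond the citation is that your reading of condition 3 is \emph{necessary}, not a convenience. With only $\mathbb{E}[\xi_n]=0$ the statement as printed is false. Take $b=1$, $A_n\equiv a=2$, $R_n\equiv 0$ and $\xi_n\equiv\xi$ for a single fair sign $\xi\in\{-1,1\}$. Then $B_n=a+\xi\in\{1,3\}$ for all $n\ge 2$, so $B_n\not\to a/b$. The martingale-difference version is exactly what the proof of Lemma \ref{lem: asymptotic uncorrelated} supplies, since there $\xi_n=(n+n_0+1)(Z^{(n+1)}_{k,d,\omega}-\mathbb{E}[Z^{(n+1)}_{k,d,\omega}\mid G_n])$.

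Two details should be tightened. First, the filtration must make $\xi_1,\dots,\xi_{n-1}$ and $R_n$ measurable at time $n$, not only $A_n,B_n$. Otherwise the increments of $M_n$ need not be orthogonal and $M_n$ need not be a martingale; in the PAM setting one takes $\sigma(G_0,\dots,G_n)$.

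Second, in both the boundedness step and the upcrossing step you must control the single step at which $B$ \emph{enters} the region where the drift is negative. Your sentence ``$B$ can only grow through the perturbations'' skips it. With $B_n\ge0$ that step's drift is at most $A_n/n\to0$, so the argument closes. A cleaner fix is to note that for $n\ge b$ the deterministic update $(1-b/n)B_n+(b/n)(A_n/b)$ is a convex combination of $B_n$ and $A_n/b$, so it never overshoots $A_n/b$. This gives the upcrossing bound $B_m<a/b+\varepsilon+\delta$ directly, without boundedness of $B_n$ or vanishing increments.

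The same observation is what actually justifies your remark that non-negativity is dispensable. Your `symmetric' argument, as phrased, needs an a priori upper bound on $B_n$ to control the downward entry step, while the upper bound itself needs a lower bound. That is circular.
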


We start our extension by recalling and introducing some notation.
Recall that $M=\mu_1 m_1+\mu_2m_2$ and, for every $y \in (0,M)$, define 
\begin{equation}\label{eq: binomP}
 p_{y,k}=\frac{\kappa(1,k)y}{\kappa(1,k)y+\kappa(2,k)( M-y)}.    
\end{equation}
Then, the statement of Lemma \ref{lem: asymptotic uncorrelated} is as follows:
\begin{lemma}
\label{lem: asymptotic uncorrelated}
For any $k \in \{1,2\}$, any $\omega \in \{0,1,\ldots,m_k\}$ and any $d \in \{m_k,m_k+1,\ldots\}$,
as $n \to \infty$, the sequence of random variables 
\begin{equation}
\left(\mathbb{P}(d_{\phi_n}=d,d_{\phi_n}^{+,\mathrm{mino},(n)}=\omega,X_{\phi_n}=k \mid G_n)\right)_{n \in \N}     
\end{equation}
converges almost surely to
\begin{equation}\label{eq: LimitingJointDistribution}
\mu_k \cdot \frac{2\Gamma(m_k+\frac{2}{\varphi_k})}{\varphi_k \Gamma(m_k)}\frac{\Gamma(d)}{\Gamma(d+\frac{2}{\varphi_k}+1)}\cdot \binom{m_k}{\omega}p^\omega_{\eta_k^{\boldsymbol{m}},k}(1-p_{\eta_k^{\boldsymbol{m}},k})^{m_k-\omega}.    
\end{equation}
\end{lemma}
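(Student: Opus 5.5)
The plan is to extend Jordan's stochastic-approximation argument for the degree sequence (the one behind Proposition \ref{pro: Jordan}(b)) to a count that additionally records how many out-edges of each vertex land on minority vertices. The key structural observation is this. The number $d_v^{+,\mathrm{mino},(n)}$ is fixed once and for all at the birth of $v$. After that, the attachment rule \eqref{eq: AttachmentEmpirical} sees $v$ only through $X_v$ and $d_v(G_n)$. So the label $\omega$ is a passive mark that travels with the vertex while its degree evolves.

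Accordingly, for $k\in\{1,2\}$, $\omega\in\{0,\dots,m_k\}$ and $d\ge m_k$, I set
\[
N_n(d,\omega,k):=\#\{v\in V(G_n)\colon d_v(G_n)=d,\ d_v^{+,\mathrm{mino},(n)}=\omega,\ X_v=k\},
\]
and aim to show $B_n:=N_n(d,\omega,k)/(n+n_0)$ converges almost surely to \eqref{eq: LimitingJointDistribution}. I would argue by induction on $d$, starting from $d=m_k$.

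\textbf{Step 1 (one-step recursion).} Write $\mathbb{E}[N_{n+1}-N_n\mid G_n]$ as the sum of three terms.

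(i) A birth term $\mathbf{1}\{d=m_k\}\,\mu_k\binom{m_k}{\omega}p_n^\omega(1-p_n)^{m_k-\omega}$. Here $p_n=\kappa(1,k)\tilde Y_1^{(n)}/\sum_{l}\kappa(l,k)\tilde Y_l^{(n)}$, because in the independent model the $m_k$ half-edges choose their targets independently given $G_n$, and the target color is minority with probability $p_n$.

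(ii) An inflow term, from vertices of type $(d-1,\omega,k)$ that receive exactly one new edge.

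(iii) An outflow term, from vertices of type $(d,\omega,k)$ that receive at least one new edge.

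By \eqref{eq: AttachmentEmpirical}, a fixed vertex of color $k$ and degree $d$ is hit by the new vertex with probability
\[
\frac{d}{2(n+n_0)}\sum_{l}\mu_l m_l\frac{\kappa(k,l)}{\sum_{l'}\kappa(l',l)\tilde Y^{(n)}_{l'}}+O(n^{-2}).
\]
By Proposition \ref{pro: Jordan}(a), the sum in this expression converges almost surely to $\varphi_k^{\boldsymbol m}$, using \eqref{eq: varphik}. Events where the new vertex sends two or more edges to the same vertex have conditional probability $O(d^2/n^2)$ per vertex. Summed over the at most $n+n_0$ vertices of degree $d$, they contribute a summable remainder $R_{n+1}$.

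\textbf{Step 2 (apply Lemma \ref{lem: JordansLemma}).} With $B_n$ as above, the recursion takes the form of Lemma \ref{lem: JordansLemma}. The constant is $b=1+d\varphi_k^{\boldsymbol m}/2$, where the $1$ comes from the change of normalization $n\to n+1$. The drift is
\[
A_n=\mathbf{1}\{d=m_k\}\mu_k\binom{m_k}{\omega}p_n^\omega(1-p_n)^{m_k-\omega}+\tfrac{(d-1)}{2}\varphi_k^{(n)}B_n(d-1,\omega,k),
\]
where $\varphi_k^{(n)}$ denotes the empirical version of $\varphi_k^{\boldsymbol m}$ from Step 1. The noise $\xi_n$ is $N_{n+1}-\mathbb{E}[N_{n+1}\mid G_n]$, which is centered and bounded by $1+\max_l m_l$.

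The induction hypothesis, together with $\tilde{\boldsymbol Y}^{(n)}\to\boldsymbol\eta^{\boldsymbol m}$, gives $A_n\to a$ almost surely. Note that $p_n\to p_{\eta_1^{\boldsymbol m},k}$ in the notation \eqref{eq: binomP}, since $\tilde Y_2^{(n)}=M-\tilde Y_1^{(n)}$ up to the $n_0$-correction. I read the index in \eqref{eq: LimitingJointDistribution} in this sense, as the minority edge-end fraction. Hence $B_n\to a/b$.

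\textbf{Step 3 (solve the recursion).} The limits satisfy $L(m_k)=\mu_k\,\mathrm{Bin}_\omega/(1+m_k\varphi_k/2)$ and
\[
L(d)=\frac{(d-1)\varphi_k/2}{1+d\varphi_k/2}\,L(d-1).
\]
The binomial factor therefore simply multiplies the pure degree recursion. The telescoping product equals $\frac{2\Gamma(m_k+2/\varphi_k)}{\varphi_k\Gamma(m_k)}\frac{\Gamma(d)}{\Gamma(d+2/\varphi_k+1)}$, exactly as in \eqref{eq: LimitingDegDistribution}. Converting counts to $\mathbb{P}(\,\cdot\mid G_n)$ only requires dividing by $n+n_0$.

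\textbf{Main obstacle.} I expect the hardest step to be the rigorous verification of the hypotheses of Lemma \ref{lem: JordansLemma}: namely that $A_n$ converges almost surely rather than only in mean, and that the remainder from multiple hits and from the $n$ versus $n+n_0$ normalization is summable. I would handle the first point by noting that $A_n$ is a continuous function of $\tilde{\boldsymbol Y}^{(n)}$ and of $B_n(d-1,\omega,k)$, both of which converge almost surely (the first by Proposition \ref{pro: Jordan}(a), the second by induction). For the second point, I would bound the double-hit contribution deterministically by $C d^2/n^2$ per relevant vertex, which is summable, as in Jordan's original proof.
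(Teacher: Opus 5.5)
Your proposal is correct and follows essentially the same route as the paper's proof. Both track the counts of vertices with given degree, minority-out-degree $\omega$ and color $k$, derive the birth/inflow/outflow recursion, and argue by induction on $d\ge m_k$ via Lemma~\ref{lem: JordansLemma} with $b=1+\frac d2\varphi_k^{\boldsymbol m}$, then telescope. Two small points in your favor: reading the index as $\eta_1^{\boldsymbol m}$ is the right one (it matches the paper's later use of $m_kp_{\eta_1^{\boldsymbol m},k}$), and keeping the empirical $p_n,\varphi_k^{(n)}$ in $A_n$ (also absorbing $\frac d2(\varphi_k^{(n)}-\varphi_k^{\boldsymbol m})B_n$ there) delivers the almost-sure convergence Jordan's lemma requires more cleanly than the paper's $o_{\mathbb P}$ terms.
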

\begin{proof}[Proof of Lemma \ref{lem: asymptotic uncorrelated}]
The proof is an adaption of the proof of \cite[Theorem 2.2]{Jo13} to cover the additional constraint on $d_{\phi_n}^{+,\mathrm{mino},(n)}$. First define 
\begin{equation}
\bar{\mathcal{N}}_{k,d,\omega}^{\sss(n)}:=\{v \in V(G_n) \mid d_v=d,d_v^{+,\mathrm{mino},(n)}=\omega,X_v=k\}
\end{equation}
and 
\begin{equation}
Z_{k,d,\omega}^{\sss(n)}:=\frac{1}{n+n_0} \#\bar{\mathcal{N}}_{k,d,\omega}^{\sss(n)}.    
\end{equation}
We want to prove almost sure convergence of $(Z_{k,d,\omega}^{\sss(n)})_{n \in \N}$ to the quantity in \eqref{eq: LimitingJointDistribution}.
First note that, for each $u \in G_n$ with $d_u=d$ and $X_u=k$, by the attachment rule \eqref{eq: AttachmentEmpirical} and Proposition \ref{pro: Jordan},
\begin{equation}\label{eq: binomAttZero}
\begin{split}
&\mathbb{P}(v_{n+1} \text{ does not connect to } u \mid G_n)\cr 
&=\sum_{l=1}^2\mu_l\left(1-\frac{d\kappa(k,l)}{2(n+n_0)\sum_{\tilde{l}=1}^2\kappa(\tilde{l},l)\tilde{Y}_{\tilde{l}}^{\boldsymbol{m},(n)}} \right)^{m_l}\cr
&=\sum_{l=1}^2\mu_l\left(1-m_l\frac{d\kappa(k,l)}{2(n+n_0)\sum_{\tilde{l}=1}^2\kappa(\tilde{l},l)\tilde{Y}_{\tilde{l}}^{\boldsymbol{m},(n)}}\right)+o_{\mathbb{P}}(1/n)\cr 
&=1-\frac{d}{2(n+n_0)}\varphi_{k}^{\boldsymbol{m}}+o_{\mathbb{P}}(1/n).
\end{split}
\end{equation}
Here, the second equality follows from a first-order Taylor expansion in combination with the assumption of strict positivity of $\kappa$.
Furthermore, for a sequence of random variables $(\xi_n)_{n \in \N}$ and a sequence $(a_n)_{n \in \N}$ of positive numbers, we write $\xi_n=o_\mathbb{P}(a_n)$ when $(\xi_n/a_n)_{n \in \N}$ converges in probability to zero.
Similarly, for each such $u$, \begin{equation}\label{eq: binomAttOne}
\begin{split}
&\mathbb{P}(v_{n+1} \text{ connects to } u \text{ via a single edge}\mid G_n)
=\frac{d}{2(n+n_0)}\varphi_{k}^{\boldsymbol{m}}+o_{\mathbb{P}}(1/n).
\end{split}
\end{equation}
From \eqref{eq: binomAttZero} and \eqref{eq: binomAttOne}, we obtain, for every vertex $u \in G_n$ with prescribed degree and color, that
\begin{equation}\label{eq: multi-edges}
\mathbb{P}(v_{n+1} \text{ and } u \text{ are connected by a multi-edge} \mid G_n)=o_\mathbb{P}(1/n).    
\end{equation}
Next, \eqref{eq: binomAttZero}, \eqref{eq: binomAttOne}, \eqref{eq: multi-edges}, and linearity of the conditional expectation, give
\begin{align}\label{eq: JordansRecursion}
\mathbb{E}[Z_{k,d,\omega}^{\sss(n+1)}\mid G_n]&=\frac{1}{n+n_0+1}\Big((n+n_0)Z_{k,d,\omega}^{\sss(n)}(1-\frac{d}{2(n+n_0)}\varphi_{k}^{\boldsymbol{m}}+o_{\mathbb{P}}(1/n))\\
&+Z_{k,d-1,\omega}^{\sss(n)}(\frac{d-1}{2}\varphi_{k}^{\boldsymbol{m}}+o_{\mathbb{P}}(1))+\mu_k\delta_{m_k,d}\binom{m_k}{\omega}p^\omega_{\eta_k^{\boldsymbol{m}},k}(1-p_{\eta_k^{\boldsymbol{m}},k})^{m_k-\omega}+o_{\mathbb{P}}(1) \Big).\nonumber    
\end{align}
Here, in view of the independent attachment rule, the last term in \eqref{eq: JordansRecursion} addresses the case that $v_{n+1} \in \mathcal{N}_{k,d,\omega}^{\sss(n+1)}$, where $\delta_{m_k,d}=1$ if $m_k=d$ and $\delta_{m_k,d}=0$, else.
We may reformulate \eqref{eq: JordansRecursion} as
\begin{equation}\label{eq: JordansRecursion2}
\begin{split}
&Z_{k,d,\omega}^{\sss(n+1)}-Z_{k,d,\omega}^{\sss(n)}=\frac{1}{n+n_0+1}\Big(-Z_{k,d,\omega}^{\sss(n)}(1+\frac{d}{2}\varphi_{k}^{\boldsymbol{m}}+o_{\mathbb{P}}(1)) +Z_{k,d-1,\omega}^{\sss(n)}(\frac{d-1}{2}\varphi_{k}^{\boldsymbol{m}}+o_{\mathbb{P}}(1))\cr 
&+\mu_k\delta_{m_k,d}\binom{m_k}{\omega}p^\omega_{\eta_k^{\boldsymbol{m}},k}(1-p_{\eta_k^{\boldsymbol{m}},k})^{m_k-\omega}+o_{\mathbb{P}}(1)+ (n+n_0+1)(Z_{k,d,\omega}^{\sss(n+1)}-\mathbb{E}[Z_{k,d,\omega}^{\sss(n+1)} \mid G_n])\Big).  
\end{split}    
\end{equation}
Note that the noise term $(n+n_0-1)(Z_{k,d,\omega}^{\sss(n+1)}-\mathbb{E}[Z_{k,d,\omega}^{\sss(n+1)} \mid G_n])$ in \eqref{eq: JordansRecursion2} satisfies
\begin{align}
(n+n_0+1)(Z_{k,d,\omega}^{\sss(n+1)}-\mathbb{E}[Z_{k,d,\omega}^{\sss(n+1)} \mid G_n]) &\leq \vert (n+n_0+1)Z_{k,d,\omega}^{\sss(n+1)}-(n+n_0)Z_{k,d,\omega}^{\sss(n+1)}\vert \nonumber\\
&\qquad+\mathbb{E}[\vert (n+n_0+1)Z_{k,d,\omega}^{\sss(n+1)}- (n+n_0+1)Z_{k,d,\omega}^{\sss(n)} \vert \mid G_n] \nonumber\\
&\leq 2\max\{m_1,m_2\}.
\end{align}
We proceed to prove almost sure convergence of $(Z_{k,d,\omega}^{\sss(n)})_{n \in \N}$ by induction on $d \geq m_k$.
In the initial case $d=m_k$, $Z_{k,d-1,\omega}^{\sss(n)}=0$ almost surely, and we  apply Lemma \ref{lem: JordansLemma} with $B_n:=Z_{k,m_k,\omega}^{\sss(n)}$, $A_n:=\mu_k\binom{m_k}{\omega}p^\omega_{\eta_k^{\boldsymbol{m}},k}(1-p_{\eta_k^{\boldsymbol{m}},k})^{m_k-\omega}+o_{\mathbb{P}}(1)$, $b=1+\frac{m_k}{2}\varphi_k^{\boldsymbol{m}}$, $\xi_n:=(n+n_0+1)(Z_{k,d,\omega}^{\sss(n+1)}-\mathbb{E}[Z_{k,d,\omega}^{\sss(n+1)} \mid G_n])$ and $R_n=0$, to obtain
\begin{equation}
Z_{k,m_k,\omega}^{\sss(n)} \stackrel{n \to \infty}{\rightarrow}  \mu_k\frac{2}{\varphi_k^{\boldsymbol{m}}m_k+2}\binom{m_k}{\omega}p^\omega_{\eta_k^{\boldsymbol{m}},k}(1-p_{\eta_k^{\boldsymbol{m}},k})^{m_k-\omega},
\end{equation}
as required, and this initializes the induction hypothesis.
Next, consider $d>m_k$ and assume that the statement of Lemma \ref{lem: asymptotic uncorrelated} holds for $d-1$.
Then we apply Lemma \ref{lem: JordansLemma} to \eqref{eq: JordansRecursion2} with $B_n:=Z_{k,d,\omega}^{\sss(n)}$, $A_n:=Z_{k,d-1,\omega}^{\sss(n)}(\frac{d-1}{2}\varphi_k^{\sss(m)}+o_{\mathbb{P}(1)})$, $b=1+\frac{d}{2}\varphi_k^{\boldsymbol{m}}$, $\xi_n:=(n+n_0+1)(Z_{k,d,\omega}^{\sss(n+1)}-\mathbb{E}[Z_{k,d,\omega}^{\sss(n+1)} \mid G_n])$ and $R_n=0$, to obtain
\begin{align}
Z_{k,d,\omega}^{\sss(n)} &\stackrel{n \to \infty}{\rightarrow}\frac{1}{1+\frac{d}{2}\varphi_k^{\boldsymbol{m}}}\frac{d-1}{2}\varphi_k^{\boldsymbol{m}}\mu_k \ \frac{2\Gamma(m_k+\frac{2}{\varphi_k^{\boldsymbol{m}}})}{\varphi_k^{\boldsymbol{m}} \Gamma(m_k)}\frac{\Gamma(d-1)}{\Gamma(d+\frac{2}{\varphi_k^{\boldsymbol{m}}})} \binom{m_k}{\omega}p^\omega_{\eta_k^{\boldsymbol{m}},k}(1-p_{\eta_k^{\boldsymbol{m}},k})^{m_k-\omega}\nonumber\\
&=\mu_k \frac{2\Gamma(m_k+\frac{2}{\varphi_k^{\boldsymbol{m}}})}{\varphi_k^{\boldsymbol{m}} \Gamma(m_k)}\frac{\Gamma(d)}{\Gamma(d+\frac{2}{\varphi_k^{\boldsymbol{m}}}+1)} \binom{m_k}{\omega}p^\omega_{\eta_k^{\boldsymbol{m}},k}(1-p_{\eta_k^{\boldsymbol{m}},k})^{m_k-\omega}.
\end{align}
This advances the induction hypothesis, and thus completes the proof of Lemma \ref{lem: asymptotic uncorrelated}.
\end{proof}

\vspace{-0.5cm}
\paragraph{The expected inverse degree of the root.}
\begin{lemma}\label{lem: Exp1overDeg}
Let $\psi_1(z):=\frac{d^2}{dz^2}\log(\Gamma(z))$ be the trigamma function. Then 
\begin{equation}
\mathbb{E}^\mathcal{T}_k\Big[\frac{1}{d_\phi^{-}+m_k}\Big]=\frac{2\Gamma(m_k+\frac{2}{\varphi_k^{\boldsymbol{m}}})}{\varphi_k^{\boldsymbol{m}} \Gamma(m_k)\Gamma(1+\frac{2}{\varphi_k^{\boldsymbol{m}}})}\left(\psi_1(1+\frac{2}{\varphi_k^{\boldsymbol{m}}})-\sum_{r=1}^{m_k-1}\frac{\beta(r,1+\frac{2}{\varphi_k^{\boldsymbol{m}}})}{r}\right).
\end{equation}
\end{lemma}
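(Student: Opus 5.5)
The plan is to compute the expectation directly from the explicit law of $d_\phi^-+m_k$ in \eqref{eq: AntunesDeg} (taken from \cite[Theorem 4.4]{AnBaBhPi26}), writing $a:=2/\varphi_k^{\boldsymbol{m}}$ and $C_k:=\frac{2\Gamma(m_k+a)}{\varphi_k^{\boldsymbol{m}}\Gamma(m_k)}$. Since $d_\phi^-+m_k$ takes values in $\{m_k,m_k+1,\ldots\}$, \eqref{eq: AntunesDeg} gives
\[
\mathbb{E}^\mathcal{T}_k\Big[\frac{1}{d_\phi^-+m_k}\Big]=C_k\sum_{r=m_k}^\infty \frac{1}{r}\,\frac{\Gamma(r)}{\Gamma(r+a+1)} .
\]
The key observation is that $\frac{\Gamma(r)}{\Gamma(r+a+1)}=\frac{\beta(r,1+a)}{\Gamma(1+a)}$. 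This pulls out the factor $\frac{C_k}{\Gamma(1+a)}$, which is exactly the prefactor in the claim. It then remains to show
\[
\sum_{r=m_k}^\infty\frac{\beta(r,1+a)}{r}=\psi_1(1+a)-\sum_{r=1}^{m_k-1}\frac{\beta(r,1+a)}{r}.
\]
Equivalently, I will prove the full-sum identity $\sum_{r\ge1}\beta(r,b)/r=\psi_1(b)$ for $b=1+a>1$, and then subtract the finitely many terms with $r<m_k$.

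For the identity, I use the integral representation $\beta(r,b)=\int_0^1 x^{r-1}(1-x)^{b-1}\,dx$. All terms are nonnegative, so Tonelli allows exchanging sum and integral. Summing the series $\sum_{r\ge1}x^{r-1}/r=-\log(1-x)/x$ yields
\[
\sum_{r\ge1}\frac{\beta(r,b)}{r}=\int_0^1\frac{-\log(1-x)}{x}(1-x)^{b-1}\,dx .
\]
Substituting $y=1-x$ turns this into $\int_0^1 (-\log y)\,y^{b-1}(1-y)^{-1}\,dy$. Expanding $(1-y)^{-1}=\sum_{j\ge0}y^j$ (again nonnegative terms, so Tonelli applies) and using $\int_0^1(-\log y)y^{s-1}\,dy=s^{-2}$ gives $\sum_{j\ge0}(b+j)^{-2}$. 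This is the standard series representation of $\psi_1(b)$.

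The only step requiring any care is justifying these interchanges and checking convergence. Convergence is clear, since $\beta(r,b)/r=O(r^{-b-1})$ and the integrand is integrable near both endpoints when $b>1$. Everything else is bookkeeping of the normalizing constant, and the prefactor then matches the one in the statement.
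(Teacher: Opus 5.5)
Your proposal is correct and follows essentially the same route as the paper: both start from the law \eqref{eq: AntunesDeg} and write $\Gamma(r)/\Gamma(r+1+a)=\beta(r,1+a)/\Gamma(1+a)$; both then interchange sum and integral, sum the logarithmic series, and substitute $z=1-t$ to reach the integral representation of $\psi_1$. The only cosmetic differences are that you sum over all $r\ge 1$ and subtract the terms with $r<m_k$ afterwards, whereas the paper removes $\sum_{r<m_k}t^r/r$ inside the integral, and that you verify the $\psi_1$ integral identity explicitly via $\sum_{j\ge0}(b+j)^{-2}$, which the paper takes as known.
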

\begin{proof}[Proof of Lemma \ref{lem: Exp1overDeg}]
From the known distribution of $d_\phi$ (see \cite[Theorem 4.4]{AnBaBhPi26}),
we obtain
\begin{equation}
\begin{split}
\mathbb{E}^\mathcal{T}_k\Big[\frac{1}{d_\phi^-+m_k}\Big]&=\sum_{r=m_k}^\infty \frac{1}{r}\mathbb{P}_k^\mathcal{T}(d_\phi^-+m_k=r)=\frac{2\Gamma(m_k+\frac{2}{\varphi_k^{\boldsymbol{m}}})}{\varphi_k^{\boldsymbol{m}} \Gamma(m_k)}\sum_{r=m_k}^\infty \frac{\Gamma(r)}{r\Gamma(r+\frac{2}{\varphi_k^{\boldsymbol{m}}}+1)}
\end{split}    
\end{equation}
Now recall the definition of the $\beta$-function:\[\beta(z_1,z_2)=\frac{\Gamma(z_1)\Gamma(z_2)}{\Gamma(z_1+z_2)}=\int_0^1 t^{z_1-1}(1-t)^{z_2-1} \text{d}t\text{ for all } z_1,z_2 \in \mathbb{C} \text{ with } \text{Re}(z_1)>0, \text{Re}(z_2)>0.\]
Thus, we have for any $r\in \N$ and any $a>2$:
\begin{equation}
\begin{split}
\frac{\Gamma(r)
}{r\Gamma(r+a)}=\frac{\beta(r,a)}{r\Gamma(a)}=\frac{1}{r\Gamma(a)}\int_0^1 t^{r-1}(1-t)^{a-1} \, \text{d}t.
\end{split}    
\end{equation}
In combination with monotone convergence, this allows us to rewrite 
\begin{equation}
\begin{split}
\sum_{r=m_k}^\infty \frac{\Gamma(r)}{r\Gamma(r+a)}&=\frac{1}{\Gamma(a)}\int_0^1\frac{(1-t)^{a-1}}{t}\sum_{r=m_k}^\infty \frac{t^r}{r} \ \text{d}t\cr 
&=\frac{1}{\Gamma(a)}\int_0^1\frac{(1-t)^{a-1}}{t}\left(-\log(1-t)-\sum_{r=1}^{m_k-1} \frac{t^r}{r}\right) \ \text{d}t\cr
&=\frac{1}{\Gamma(a)}\int_0^1\frac{z^{a-1}}{1-z}\left(-\log z-\sum_{r=1}^{m_k-1} \frac{(1-z)^r}{r}\right) \ \text{d}z\cr
&=\frac{\psi_1(a)}{\Gamma(a)}-\frac{1}{\Gamma(a)}\sum_{r=1}^{m_k-1}\frac{\beta(a,r)}{r}.
\end{split}    
\end{equation} 
Finally, inserting $a=\lambda_k=\frac{2}{\varphi_k^{\boldsymbol{m}}}+1$ (cf. Corollary \ref{cor: exponents}) completes the proof of Lemma \ref{lem: Exp1overDeg}.
\end{proof}
\vspace{-1cm}

\paragraph{Finalization of the proof of Proposition \ref{pro: MinorityFraction}.}
Recalling that $\mathbb{P}_k^{\mathcal{T}}(d_\phi^-+m_k)=\frac{2\Gamma(m_k+\frac{2}{\varphi_k^{\boldsymbol{m}}})}{\varphi_k^{\boldsymbol{m}} \Gamma(m_k)}\frac{\Gamma(d)}{\Gamma(d+\frac{2}{\varphi_k^{\boldsymbol{m}}}+1)}$, by Lemma \ref{lem: asymptotic uncorrelated},
\begin{equation}\label{eq: CondExpFractionConv}
\mathbb{E}\Big[\frac{d_{\phi_n}^{+,\mathrm{mino},(n)}}{d_{\phi_n}}\boldsymbol{1}\{X_{\phi_n}=k\} \mid G_n\Big] \stackrel{n \to \infty}{\rightarrow} \mu_k\mathbb{E}_k^\mathcal{T}\Big[\frac{1}{d_\phi^-+m_k}\Big]m_kp_{\eta_1^{\boldsymbol{m}},k} \quad \text{almost surely,}  
\end{equation}
where, by \eqref{eq: binomP} and \eqref{eq: varphik}, $m_kp_{\eta_1^{\boldsymbol{m}},k}=\frac{1}{\mu_k}\eta_k^{\boldsymbol{m}}\varphi_k^{\boldsymbol{m}}.$
To formally reason \eqref{eq: CondExpFractionConv}, note that by Lemma \ref{lem: asymptotic uncorrelated} and Scheff\'e's Lemma (e.g., \cite[Lemma 5.10]{Wi91})
\begin{align}
\sum_{k=1}^2\sum_{d=m_k}^\infty \sum_{\omega=1}^{m_k} \Big\vert &\mathbb{P}(d_{\phi_n}=d,d_{\phi_n}^{+,\mathrm{mino},(n)}=\omega,X_{\phi_n}=k \mid G_n)\\
&\quad -\mu_k \cdot \frac{2\Gamma(m_k+\frac{2}{\varphi_k})}{\varphi_k \Gamma(m_k)}\frac{\Gamma(d)}{\Gamma(d+\frac{2}{\varphi_k}+1)}\cdot \binom{m_k}{\omega}p^\omega_{\eta_k^{\boldsymbol{m}},k}(1-p_{\eta_k^{\boldsymbol{m}},k})^{m_k-\omega} \Big\vert \stackrel{n \to \infty}{\rightarrow} 0,\nonumber
\end{align}
almost surely, which, by boundedness of the fraction $d_{\phi_n}^{+,\mathrm{mino},(n)}/d_{\phi_n},$ implies \eqref{eq: CondExpFractionConv}.
Finally, we set $A_{k,\boldsymbol{m},\boldsymbol{\varphi}}:=\mathbb{E}^\mathcal{T}_k[\frac{1}{d_\phi^{-}+m_k}]$, and insert the expression from Lemma \ref{lem: Exp1overDeg}. In combination with Lemma \ref{lem: binomial}, we thus obtain a description of the almost sure limits of all for terms on the decomposition \eqref{eq: MinoritityFractionDecomposition}, which concludes the proof of Proposition \ref{pro: MinorityFraction}.
\end{proof}
\section{Discussion}\label{sec: discussion}
\paragraph{Undirected local limit for the colored PAM.}
In this paper we studied the PageRank in the undirected colored model; yet, we did not derive its local limit. Instead we based our analysis solely on the known local limit for the directed version. Then, a natural open problem is to derive the local limit for the undirected model with colors.
The approach of Antunes et al.\ \cite{AnBaBhPi26} for obtaining the directed local limit by reducing the problem to discrete dynamical systems of the same type that had already been studied by Jordan \cite{Jo13} (cf.\ \eqref{eq: LyapunovFunction} and Lemma \ref{lem: JordansLemma} above) is based on {\em fringe convergence}, which works particularly well for trees. Its structural strength lies in the fact that it provides local convergence almost surely, going beyond local convergence in probability, which often follows from a second-moment method.  
Recall from \cite{BeBoCh14} that the known local limit for the undirected PAM without colors is a multi-type branching process tree, where vertices carry a two-dimensional type in $[0,1] \times \{y,o \}$ describing the absolute time of birth and the relative age (\textbf{y}ounger or \textbf{o}lder) compared to that of root in the underlying PAM.  
We may conjecture that a potential candidate for the local limit in the undirected PAM with colors is a branching process tree where vertex types also include information on the relative age.
It is an interesting question whether the proof of \cite[Theorem 4.3]{AnBaBhPi26} has a natural adaption to undirected graphs when splitting the test-graph, which under the above conjecture can be assumed to be a tree, into a subgraph consisting of older vertices and a subgraph consisting of younger vertices. A first step in this direction can be seen in the proof of Lemma \ref{lem: asymptotic uncorrelated}, where we combine a constraint on the incoming neighborhood (the in-degree) with a constraint on the outgoing neighborhood (number of out-edges that connect to minority vertices). 
Another approach to obtain an undirected local limit for the colored PAM is to obtain a P\'olya-urn representation as established for the model without colors by Berger et al.\  \cite{BeBoCh14}. Note that as in the case of the undirected PAM with i.i.d.\ random numbers of half-edges for the new vertices (cf.\  \cite{GaHaHoRa22} for a study of various sequential PAMs with i.i.d.\ out-degrees satisfying a moment condition), implementing colors to the model destroys exchangeability, and calculations thus cannot be simplified by employing de Finetti's Theorem.  
In particular, following the strategy of proof employed in \cite{GaHaHoRa22}, the steps to prove the undirected local convergence for the colored PAM include:
\begin{itemize}
    \item[$\rhd$] Reduce problem to the tree-case ($m=1$) by identifying the model for $m > 1$ with a suitable \textbf{collapsed} tree-model.
    \item[$\rhd$] Find a candidate for the P\'olya urn representation of the tree-model and verify by explicit calculation that it has the same distribution as the colored PAM at least on the level of neighborhoods around uniformly chosen vertices.
    \item[$\rhd$] Analyse the long-term behavior of the representation on the local level.
\end{itemize}

\paragraph{Precise asymptotics of PageRank in when undirected local limit is known.}
Note that we were able to prove the power-law bounds for PageRank in Theorem \ref{thm: Power-law bounds} without knowing the local limit (and even more, without knowing that this local limit exists at all) for the undirected PAM with colors. Knowing the local limit gives rise to a convergence statement for PageRank beyond power-law bounds in the following sense:  
For any locally finite, potentially infinite connected rooted graph $(G,\phi)$, we define
 \begin{equation}\label{eq: vonNeumann}
 R_\phi(G)=(1-c)\sum_{s=0}^\infty c^s \sum_{j \in V(G)} (\boldsymbol{P}^s)_{j\phi}. 
\end{equation}   
Here, $\boldsymbol{P}=(p_{ij})_{i,j \in \N}$ is the (possibly infinite-dimensional) matrix which is defined by $p_{ij}=a_{ij}/d_{v_i}$, with $\boldsymbol{A}=(a_{ij})_{i,j \in \N}$ denoting the adjacency matrix of $G$.
Then \cite[Theorem 9.2]{vH24} guarantees that for any sequence of directed random graphs that converges locally weakly in the marked backward sense with limit $(G_\infty,\phi)$, the sequence of PageRanks at a uniformly chosen vertex from the respective graph converges weakly to the limiting PageRank $R_\phi(G_\infty)$ as defined in \eqref{eq: vonNeumann}, where $\mathbb{E}[R_\phi(G_\infty)]<\infty$. 
This result also covers the case of undirected graphs that converge locally weakly.

\paragraph{Acknowledgments.} 
The work of RvdH and NL was supported by the
Netherlands Organisation for Scientific Research (NWO) through the Gravitation NETWORKS grant 024.002.003, and by the National Science Foudation under Grant No. DMS-1928930 while the authors were in residence at the Simons Laufer Mathematical Sciences Institute in Berkeley, California, during the Spring semester 2025.
\printbibliography
\end{document}